\newcommand\coker{\operatorname{coker}}
\newcommand\image{\operatorname{image}}
\newcommand\tensor{\otimes}
\newcommand\isom{\cong}
\newcommand\bq{\begin{equation}}
  \newcommand\eq{\end{equation}}
\newtheorem{proposition}{Proposition}[section]
\newtheorem{theorem}[proposition]{Theorem}
\newtheorem{lemma}[proposition]{Lemma}
\theoremstyle{definition}
\newtheorem{definition}[proposition]{Definition}
\newtheorem*{nn-definition}{Definition}
\theoremstyle{remark}
\newtheorem{remark}[proposition]{Remark}
\numberwithin{equation}{section}
\newcommand{\cut}[1]{}
\newcommand\hidden[1]{}
\newcommand{\FF}{\mathbb{F}}
\newcommand{\PP}{\mathbb{P}}
\newcommand{\QQ}{\mathbb{Q}}
\newcommand{\RR}{\mathbb{R}}
\newcommand{\setmin}{{\smallsetminus}}                                %
\newcommand{\ZZ}{{\mathbb{Z}}}                                        %
\newcommand{\cO}{{\mathcal O}}                                        %
\newcommand{\Bl}{\operatorname{Bl}}                                   
\newcommand{\Cl}{\operatorname{Cl}}                                   
\newcommand\ord{\operatorname{ord}}
\newcommand\torsion{\operatorname{Torsion}}
\newcommand\tors{\operatorname{Tors.}}
\definecolor{pal1}{RGB}{215,48,39}
\definecolor{pal2}{RGB}{252,141,89}
\definecolor{pal3}{RGB}{254,224,144}
\definecolor{pal4}{RGB}{145,191,219}
\definecolor{pal5}{RGB}{69,117,180}
\pgfplotsset{
  contour/label node code/.code={
    \node{\pgfmathprintnumber{#1}\,ms};
  }
}
\title{Non-existence of negative curves}
\author{Javier Gonz\'alez Anaya, Jos\'e Luis Gonz\'alez and Kalle Karu}
\address{
  J. Gonz\'alez-Anaya, Department of Mathematics, University of California, Riverside,
  Riverside, CA 92521, United States.  \newline \indent
  J.L. Gonz\'alez, Department of Mathematics, University of California, Riverside,
  Riverside, CA 92521, United States.  \newline \indent
  K. Karu,
  Department of Mathematics, University of British Columbia, 
  Vancouver, BC V6T1Z2, Canada.} 
\email{javiergo@ucr.edu, jose.gonzalez@ucr.edu, karu@math.ubc.ca}
\thanks{The second author was supported by a grant from the Simons Foundation (Award Number 710443) and by the UCR Academic Senate. The third author was supported by a NSERC Discovery grant.}
\begin{document}
\begin{abstract}
Let $X$ be a projective toric surface of Picard number one blown up at a general point. We bring an infinite family of examples of such $X$ whose Kleiman-Mori cone of curves is not closed: there is no negative curve generating one of the two boundary rays of the cone. These examples are related to Nagata's conjecture and rationality of Seshadri constants.
\end{abstract}
\maketitle
\setcounter{tocdepth}{1} 




\section{Introduction}

Let $X_\Delta$ be a projective toric surface defined by a rational triangle $\Delta\subseteq \RR^2$, and let $X$ be the blowup of $X_\Delta$ at a general point $e$. In this article we consider the question of whether the Kleiman-Mori cone of curves of $X$ is closed. The cone of curves of $X$ is two-dimensional, with one extremal ray generated by the class of the exceptional curve $E$ and the other extremal ray generated by a class $\gamma$ of nonpositive self-intersection. If we can choose $\gamma$ to be the class of an irreducible curve $C$, we call this $C$ a negative curve of $X$. In other words, a negative curve is any irreducible curve $C\subseteq X$ of nonpositive self-intersection, $C^2\leq 0$, different from $E$. Our main result is to bring a family of examples of $X$ that have no negative curves, and hence their cones of curves are half-open.

\begin{theorem} \label{thm-main}
Let $\Delta$ be the triangle with vertices 
\[ (\alpha,0), (\alpha+9/7,0), (3,7), \]
where $\alpha$ is a rational number in the interval $[-4/15, -16/105]$, and let $X=\Bl_e X_\Delta$ be defined over any algebraically closed field of characteristic $0$. Then, $X$ contains no negative curve.
\end{theorem}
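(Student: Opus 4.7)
\emph{Proof proposal.} The plan is to argue by contradiction. Assume there exists a negative curve $C\subset X$, with image $\bar C\subset X_\Delta$ passing through $e$ with multiplicity $m=\operatorname{mult}_e \bar C \ge 1$. Since $X_\Delta$ has Picard number one, fix an ample generator $H$ of $\Pic(X_\Delta)_\QQ$ and write $[C]=aH-mE$ in $\Pic(X)_\QQ$ for some rational $a>0$. The hypothesis $C^2\le 0$ becomes the numerical bound
\[ m^2 \ \ge\ a^2\cdot H^2, \]
where $H^2$ is a rational number determined by $\Delta$. First, I would make this numerical data explicit: identify the weighted projective plane $\PP(w_0,w_1,w_2)\cong X_\Delta$ from the primitive inner normals to the edges of $\Delta$ (the weights depend on $\alpha$), and compute $H^2$ as a function of $\alpha$. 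This fixes the ratio $\sqrt{H^2}$ that any candidate $m/a$ must meet or exceed.

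Next, since $e$ is general, I may place it at the identity of the dense torus $T\subset X_\Delta$. An irreducible $\bar C$ of class $aH$ with $\operatorname{mult}_e \bar C\ge m$ is then cut out by a Laurent polynomial $f$ in two variables whose support lies in $a\Delta\cap \ZZ^2$ (after a suitable translation) and all of whose partial derivatives of order strictly less than $m$ vanish at the identity. The question is reduced to: can such an $f$ exist with $\bar C$ irreducible and $m^2\ge a^2 H^2$? The bulk of the proof would consist of ruling out every candidate pair $(a,m)$. I would stratify by the size of $a$: for small $a$, the polytope $a\Delta$ contains few enough lattice points that the constraints can be written out explicitly, and each admissible $f$ should be shown to either factor nontrivially or vanish on a toric boundary divisor, contradicting irreducibility of $\bar C$; for large $a$, I would compare the dimension $h^0(X_\Delta,aH)$ against the $\binom{m+1}{2}$ conditions imposed by the multiplicity, and use $m\ge a\sqrt{H^2}$ to show that any residual solutions again force a factorization.

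The main obstacle will be the sharpness required near the boundary of the allowed interval. The very fact that the statement uses a closed interval $[-4/15,\,-16/105]$ suggests that for $\alpha$ just outside this interval, an explicit low-degree negative curve appears, so the argument must be precise enough to exclude negative curves exactly down to the endpoints. I expect this sharpness to come from a careful Newton-polygon analysis: tracking which monomials supported in $a\Delta$ are forced to appear in $f$ by the multiplicity condition at the identity, and showing that the resulting leading terms are incompatible with irreducibility of $\bar C$ precisely when $\alpha\in[-4/15,\,-16/105]$. The arithmetic of the denominators $7$, $15$, and $105$ visible in the statement is likely the combinatorial core of this incompatibility, and pinning down this arithmetic structure is what I expect to require the most work.
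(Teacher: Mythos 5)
There is a genuine gap, and it is structural rather than a matter of missing detail. The decisive point is that the non-existence statement is characteristic-dependent, while every tool you propose (lattice-point counts in $a\Delta$ versus the $\binom{m+1}{2}$ multiplicity conditions, Newton-polygon analysis, forcing factorizations) is characteristic-free. Over $\overline{\FF}_2$ the same blowups $X$ \emph{do} carry irreducible curves of self-intersection zero in infinitely many degrees along the boundary ray (the curves cut out by $\xi$, $\zeta$ and products $x^l\xi^i\zeta^j$), so the vanishing conditions you want to show are independent in fact fail to be independent mod $2$. Your "large $a$" step is exactly the borderline case: on the ray $C^2=0$ the number of conditions matches the number of monomials up to the right correction, so the naive count never rules anything out, and whether the integral matrix of conditions has a kernel depends on the characteristic. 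Any argument of the kind you sketch would, if it worked, equally "prove" non-existence in characteristic $2$, where it is false that the relevant linear systems are empty; so the multiplicity conditions cannot "force a factorization" by combinatorics alone. (A smaller inaccuracy: these $X_\Delta$ are not weighted projective planes --- their class groups have torsion; they are quotients of $\PP^2$ by finite abelian groups --- though this does not affect your numerical setup. Also, the interval endpoints do not mark the appearance of explicit negative curves in characteristic $0$; they mark the boundary of the overlap of two characteristic-$2$ "diamonds.")

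What the paper does instead, and what your proposal is missing, is to make the characteristic enter essentially: one first shows that in characteristic $2$ all sections in the relevant degrees form the ring $R=\FF_2[\xi,\zeta,x^{\pm 1}]$ generated by two explicit curves of self-intersection zero, then proves that no element of $R$ of positive degree lifts to characteristic $0$. The obstruction to lifting mod increasing powers of $2$ lives in a module $M$ of first cohomologies; the paper proves $M$ is a free $R$-module (via an $M$-regular sequence argument using the geometry of $C$ and $D$), reduces the liftability of an arbitrary element to a putative relation $r_1x_1+r_2x_2=0$ in $M$ with $x_1\in\{f_1,\,f_1^2+g_1f_1\}$, $x_2\in\{f_2,\,f_2^2+g_2f_2\}$, rules such relations out by freeness and a degree bookkeeping, and finally verifies non-vanishing of the four obstruction classes by Smith-normal-form computations on the integral matrices representing the multiplicity conditions. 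If you want to salvage your framework, the piece you must add is precisely this lifting/obstruction mechanism (or some other genuinely characteristic-$0$ input such as a degeneration or specialization argument); without it, the plan as written cannot close.
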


The set of varieties $X_\Delta$ defined by a rational triangle contains as a subset all weighted projective planes $\PP(a,b,c)$. The examples in the theorem are not in this subset. However, any $X_\Delta$ defined by a rational triangle is a quotient of $\PP^2$ by a finite abelian group. 

Let us put this result in the context of \cite{Geography}. In that article we considered all rational triangles $\Delta$ and catalogued all known negative curves. Part of this work is summarized in Figure~\ref{fig-plot}. We consider triangles $\Delta$ with a horizontal base and the left and right edges with slopes $s$ and $t$. Then, a triangle $\Delta$, or the blowup $X=\Bl_e X_\Delta$, corresponds to a point in the $st$-plane. 
We may and will assume that $e=(1,1)$ is the identity point in the torus $T\subset X_\Delta$. 
A negative curve is defined by a Laurent polynomial $\xi(x,y)$ that vanishes to order $m$ at the point $e$ and whose Newton polygon fits into a triangle $\Delta$ of area $\leq m^2/2$. For each polynomial $\xi$ there may be many such triangles $\Delta$, hence the same polynomial $\xi$ can define a negative curve in many varieties $X$. The set of all such $X$ for a fixed $\xi$ is a region in the $st$-plane that typically has the shape of a diamond. 
In Figure~\ref{fig-plot}, different diamonds correspond to different polynomials $\xi$. If some $X$ contains a negative curve with strictly negative self-intersection, then this negative curve is unique in $X$. This explains why the diamonds can intersect only at their boundary points, in which case $X$ contains two negative curves of zero self-intersection. Parts of the plane are not covered by any diamond. The varieties $X$ corresponding to these points are expected to contain no negative curve. The varieties $X$ in Theorem~\ref{thm-main} correspond to points in the intersection of the diamonds labeled with $\xi$ and $\zeta$.

\begin{figure}[t]
  \centering
  \includegraphics[width=\textwidth]{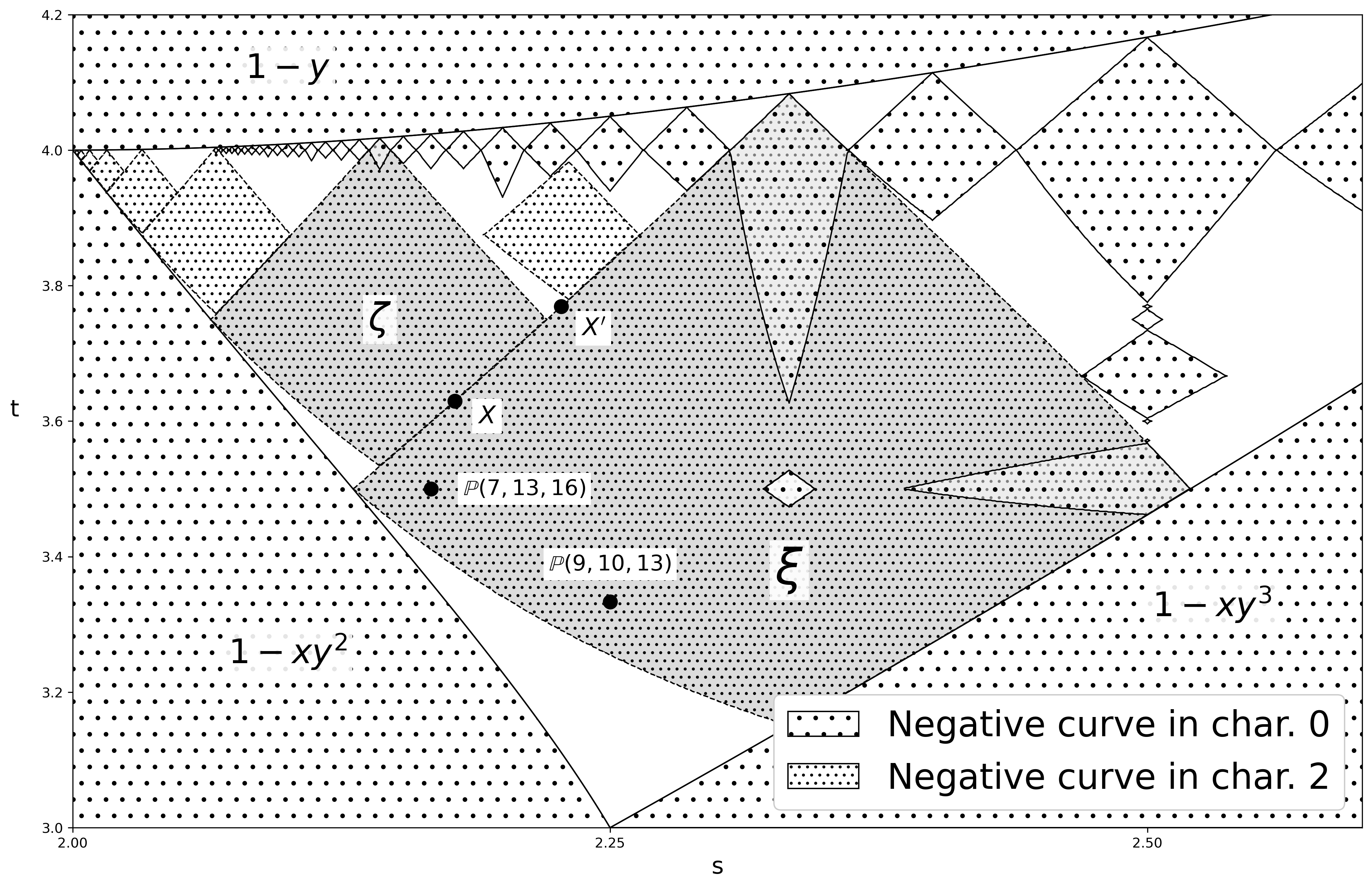}
  \caption{Negative curves in the parameter space of triangles.}
  \label{fig-plot}
\end{figure}

We will use results in characteristic $2$ to prove Theorem~\ref{thm-main}.
Toric varieties $X_\Delta$ can be defined over a field of any characteristic. The picture of diamonds for each field only depends on the characteristic and not on the field. A polynomial $\xi$ that defines a negative curve in characteristic $0$ can be chosen to have integer coefficients. Its reduction mod $p$ then defines a (possibly reducible) negative curve in characteristic $p$. Going from characteristic $0$ to characteristic $p$, the diamonds may get larger, and there will be new diamonds. Figure~\ref{fig-plot} shows examples of diamonds of negative curves in characteristic $0$ and a few additional diamonds of negative curves in characteristic $2$. The points corresponding to $X$ in Theorem~\ref{thm-main} lie in the intersection of two such diamonds that only exist in characteristic $2$. This means that $X$ in characteristic $2$ contains two disjoint irreducible curves $C$ and $D$, both with self-intersection zero. These curves span an extremal ray of the cone of curves of $X$. In fact, if $\xi(x,y)$ and $\zeta(x,y)$ are the defining equations of the two curves, then polynomials in $\FF_2[\xi,\zeta,x^{\pm 1}]$ give us all curves on this ray. A negative curve for $X$ in characteristic $0$ must also have self-intersection $0$ and hence lie on the same extremal ray (otherwise its reduction mod $2$ would give a strictly negative curve in characteristic $2$). We prove that no polynomial in $\FF_2[\xi,\zeta, x^{\pm 1}]$ (except the monomials $x^l$) can be lifted to characteristic $0$ while preserving the condition of vanishing at $e$ to order $m$.

Let us mention three open problems that are related to the existence of negative curves: the Mori Dream Space property in positive characteristic, irrationality of Seshadri constants, and Nagata's conjecture. The triangles in   Theorem~\ref{thm-main} do not give us new information about these problems. However, a similar study of more general triangles would solve the open problems.

It is known by a result of Cutkosky \cite{Cutkosky} (see also the algebraic version by Huneke \cite{Huneke}) that $X$ is a Mori Dream Space (MDS) if and only if it contains a negative curve $C$ and a curve $D$ disjoint from it. This result holds even when $C$ and $D$ have self-intersection zero, see \cite[Lemma~2.1]{GGK1}. Thus, the varieties $X$ in Theorem~\ref{thm-main} are MDS in characteristic $2$. In positive characteristic, it is also known \cite{Cutkosky} that if $X$ contains a negative curve $C$ with strictly negative self-intersection, then $X$ is a MDS. This fact uses the result of Artin that such negative curves are contractible \cite{Artin62}. However, if $C$ has self-intersection zero, then it is not known if $X$ in positive characteristic is a MDS or not. In fact, we do not know any blowup $X$ as above that is not a MDS in positive characteristic. Let $\Delta'$ be the triangle as in Theorem~\ref{thm-main} but with $\alpha=-1/7$. Then $X' = \Bl_e X_{\Delta'}$ in characteristic $2$ contains a negative curve $C$ of self-intersection zero, defined by the same polynomial $\xi$ as above. We conjecture that $X'$ contains no curve $D$ disjoint from $C$ and hence is not a MDS. In Figure~\ref{fig-plot}, the variety $X'$ lies on the boundary of only one diamond.

Negative curves in $X$ are related to Seshadri constants (see the discussion in \cite{Castravet1,CutkoskyKurano}). Recall that if $L$ is an ample divisor on a variety $Y$ and $e\in Y$ is a point, then the single point Seshadri constant of $Y, L, e$ is 
\[ \epsilon(Y, L, e) = \inf_{e\in C} \frac{L\cdot C}{\operatorname{mult}_e C}.\]
Here the infimum is over all curves $C$ in $Y$ passing through $e$.
It is an open problem to find examples of surfaces where the single point Seshadri constant is irrational (see for example \cite{primer, DumnickiRat, HanumanthuHarbourne} for a survey and more recent work). In our case, $Y=X_\Delta$, $L$ any ample divisor on $Y$, the single point Seshadri constant at $e$ is the slope (with respect to $E$ and $L$) of the extremal ray in the cone of curves of $X=\Bl_e Y$.  If $X$ contains a negative curve then the slope is clearly rational. The examples $X$ in Theorem~\ref{thm-main} also have rational slopes because such an $X$ contains a negative curve in characteristic $2$ and the cones in characteristic $0$ and $2$ are equal. In general, if $X$ does not contain a negative curve, then the extremal ray is generated by a class $\gamma$ of self-intersection zero. The slope of this ray can be computed from the triangle, and it is typically irrational. When $X_\Delta = \PP(a,b,c)$, then the slope is rational if and only if $abc$ is a square in $\ZZ$. Examples such as the blowups of $\PP(9,10,13)$, $\PP(7,13,16)$ are conjectured to contain no negative curve \cite{KuranoMatsuoka}. Such weighted projective planes would then provide examples with irrational Seshadri constants.

Nagata's conjecture \cite{Nagata} states that if $p_1,\ldots,p_r$ are very general points in $\PP^2$ and $C\subseteq \PP^2$ is a curve that has multiplicity $m_i$ at the point $p_i$, then for $r > 9$,
\begin{equation} \label{eq-Nagata}
 \deg C > \frac{1}{\sqrt{r}} \sum_i m_i.
 \end{equation}
Nagata \cite{Nagata} proved the conjecture in the case where $r$ is a square. A slightly weaker inequality was proved by Xu \cite{xu}. 
A toric variety $X_\Delta$ given by a triangle can always be presented as a quotient of $\PP^2$ by a finite abelian group. The examples $X_\Delta$ in Theorem~\ref{thm-main} are quotients by an abelian group of order $r$ that is a square. Then a curve $\overline{C}$ in $X_\Delta$ that has multiplicity $m$ at $e$ can be pulled back to $\PP^2$ so that it passes through the $r$ preimages of $e$ with multiplicity $m$. Equation~\ref{eq-Nagata} applied in this situation says that the strict transform $C$ of $\overline{C}$ in $X=\Bl_e X_\Delta$ has positive self-intersection. In other words, if the inequality holds then $X$ has no negative curve. Of course, the $r$ points in this case are not very general in the sense of Nagata, so Nagata's conjecture, which is known in the case where $r$ is a square, does not imply Theorem~\ref{thm-main}. However, it is true that Theorem~\ref{thm-main} is equivalent to Nagata's inequality for the special set of points in the preimage of $e$. For the converse direction, assume that a curve $C\subseteq \PP^2$ fails the inequality in (\ref{eq-Nagata}). Then the union of the translates of $C$ by the group action is the inverse image of a curve in $X_\Delta$ whose strict transform in $X$ has nonpositive self-intersection. 

The proof of Theorem~\ref{thm-main} is organized as follows. We fix one triangle $\Delta$ and variety $X$ as in the statement of Theorem~\ref{thm-main}. In Section~\ref{sec-2} we prove the claim that polynomials in the ring $R=\FF_2[\xi,\zeta,x^{\pm 1}]$ give us all curves of self-intersection zero in the variety $X$ in characteristic $2$. In Section~\ref{sec-4} we show that none of the curves in characteristic $2$ can be lifted to characteristic $0$. We show that the obstruction to this lifting lies in a space $M$ of first cohomologies. Section~\ref{sec-3} defines $M$, gives it the structure of an $R$-module, and proves that it is a free module. The freeness is then used in Section~\ref{sec-4} to show that the relevant obstruction to lifting does not vanish. In Section~\ref{sec-5} we present some linear algebra computations. This is the only section where the proofs depend on the specific triangle $\Delta$; all other proofs work for any triangle in the intersection of the diamonds of $\xi$ and $\zeta$.

We work over the field $\FF_2$ throughout the paper. The main result, Theorem~\ref{thm-main}, holds over a field of characteristic zero. The background material about toric varieties such as their Weil divisors, class group, vanishing theorems, applies over any ground field.

\section{The section ring $R$} \label{sec-2}

Let $X$ be a normal variety and $D_1,\ldots, D_n$ Weil divisors on $X$. Define the section ring
\[ R= \bigoplus_{m_1,\ldots, m_n \in \ZZ} H^0\Big(X, \cO_X\Big(\sum_i m_i D_i\Big)\Big).\]
If the classes of $D_1, \ldots, D_n$ span the class group $Cl(X)_\QQ$, then $R$ is called a Cox ring of $X$ \cite{HuKeel}. In this section we consider a particular section ring of $X=\Bl_e X_\Delta$ and compute this ring explicitly.

\subsection{Weil divisors on $X_\Delta$.}

Toric varieties and their $T$-invariant divisors can be defined over any field (and even over any ring). We start with an arbitrary field, but then specialize to the case of $\FF_2$. Let us first recall how to find the class group of a toric variety using its $T$-invariant Weil divisors \cite{Fulton}. 

Let $\Delta$ be a rational triangle in $\RR^2$, for example a triangle as in Theorem~\ref{thm-main}. The toric variety $X_\Delta$ is $\QQ$-factorial, meaning that every Weil divisor has a nonzero integer multiple that is Cartier. We can compute the intersection number $C\cdot D$ of two curves $C,D \subseteq X_\Delta$ by reducing it to the intersection number of a curve and a Cartier divisor.

The class group of $X_\Delta$ is a direct sum of a free group of rank $1$ and a finite group. To construct it, one starts with $T$-invariant Weil divisors ($T$-Weil divisors for short), that means, integral linear combinations of the three boundary divisors. The group of $T$-Weil divisors is generated by nef divisors which have a geometric description. A nef $T$-Weil divisor corresponds to a triangle with sides parallel to the sides of $\Delta$ and all sides integral in the sense that the affine line spanned by each side contains a lattice point. Addition is defined as the Minkowski sum of triangles. 

Two triangles correspond to linearly equivalent divisors if they differ by an integral translation. The class group $Cl(X_\Delta)$ of $X_\Delta$ is the quotient of the group of $T$-Weil divisors by the linear equivalence relation. 

A $T$-Weil divisor defines the sheaf $\cO_{X_\Delta}(D)$ on $X_\Delta$. This sheaf is locally free if and only if $D$ is Cartier, that means, if the corresponding triangle is integral. More precisely, the sheaf is locally free everywhere except at the $T$-fixed points corresponding to non-integral vertices of the triangle.  Below we will see several cases where a morphism is an isomorphism or a sequence is exact if the divisors involved are Cartier. In the Weil case this may fail because of a torsion sheaf supported at the $T$-fixed points. (By a torsion sheaf we always mean a coherent torsion sheaf.)

Because a $T$-Weil divisor is supported in the complement of the torus $T$, we have a canonical trivialization of the sheaf $\cO_{X_\Delta}(D)$ on the torus. We can therefore view sections of the sheaf over an open $T$-invariant subset as consisting of Laurent polynomials in $x, y$ with certain vanishing orders along the three boundary divisors.

Given $T$-Weil divisors $D_1$ and $D_2$ in $X_\Delta$, there is an inclusion of sheaves
\[ \cO_{X_\Delta}(D_1) \otimes_{\cO_{X_\Delta}} \cO_{X_\Delta}(D_2) \subseteq \cO_{X_\Delta}(D_1 + D_2).\]
The inclusion may not be an equality, but the cokernel is a torsion sheaf supported at the $T$-fixed points. The combinatorial reason why the inclusion may not be an equality is that lattice points in the Minkowski sum of two polyhedra may not be sums of lattice points from the two polyhedra. In the case above, working locally in a $T$-invariant open subset, the polyhedra are rational translates of a cone in $\RR^2$.

Given a global section 
\[ \xi\in H^0(X_\Delta, \cO_{X_\Delta}(D_1) ),\]
we let $\overline{C}=V(\xi)$ be the closure of the curve that $\xi$ defines away from the three $T$-fixed points. For any $T$-Weil divisor $D_2$, multiplication with $\xi$ defines a morphism
\[ \xi: \cO_{X_\Delta}(D_2) \to \cO_{X_\Delta}(D_1 + D_2).\]

\begin{lemma} \label{lem-tor}
Assume that $\overline{C}$ is irreducible and intersects the torus $T$. Then for any $D_2$ the morphism $\xi$ is injective with cokernel
\[ \coker(\xi) = \cO_{X_\Delta}(D_1 + D_2)|_{\overline{C}}/\torsion.\]
\end{lemma}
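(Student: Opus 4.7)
The plan is to identify the image $\xi \cdot \O_{X_\Delta}(D_2)$ inside $\O_{X_\Delta}(D_1 + D_2)$ with the subsheaf $\O_{X_\Delta}(D_1 + D_2 - \overline{C})$ of sections vanishing along $\overline{C}$; both the injectivity and the description of the cokernel modulo torsion will then fall out by comparing this subsheaf with $\mathcal{I}_{\overline{C}}\cdot \O_{X_\Delta}(D_1+D_2)$. Injectivity is immediate: $\xi$ is nonzero because it defines the curve $\overline{C}$, and $\O_{X_\Delta}(D_2)$ is a rank-one reflexive sheaf on the integral variety $X_\Delta$, hence torsion-free, so multiplication by $\xi$ is nonzero at the generic point and therefore injective everywhere.

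For the image, the hypotheses that $\overline{C}$ is irreducible and meets the torus force the effective divisor of zeros of $\xi$, viewed as a section of $\O_{X_\Delta}(D_1)$, to equal $\overline{C}$ with multiplicity one: no boundary divisor can appear, since it would become a component of $\overline{C}$ and destroy irreducibility. Working inside the function field, a rational function $h$ lies in $\xi \cdot \O_{X_\Delta}(D_2)$ if and only if $h/\xi$ lies in $\O_{X_\Delta}(D_2)$, equivalently $\operatorname{div}(h) + D_1 + D_2 \geq \overline{C}$, equivalently $h \in \O_{X_\Delta}(D_1 + D_2 - \overline{C})$. This gives $\coker(\xi) = \O_{X_\Delta}(D_1+D_2) / \O_{X_\Delta}(D_1+D_2-\overline{C})$.

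There is always an inclusion $\mathcal{I}_{\overline{C}} \cdot \O_{X_\Delta}(D_1+D_2) \subseteq \O_{X_\Delta}(D_1+D_2-\overline{C})$, which induces a surjection $\pi \colon \O_{X_\Delta}(D_1+D_2)|_{\overline{C}} \twoheadrightarrow \coker(\xi)$. Away from the three $T$-fixed points, all the sheaves involved are locally free and this inclusion is an equality, so $\ker \pi$ is supported on a finite set and is therefore a torsion $\O_{\overline{C}}$-module. I would then show that $\coker(\xi)$ is itself torsion-free on $\overline{C}$ by a local valuation argument: if $r \cdot m$ with $r$ a local regular function and $m$ a local section of $\O_{X_\Delta}(D_1+D_2)$ lies in $\O_{X_\Delta}(D_1+D_2-\overline{C})$, then, since $D_1+D_2$ has trivial coefficient along $\overline{C}$, taking orders gives $\ord_{\overline{C}}(r) + \ord_{\overline{C}}(m) \geq 1$, forcing either $r \in \mathcal{I}_{\overline{C}}$ or $m \in \O_{X_\Delta}(D_1+D_2-\overline{C})$. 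Hence $\ker \pi$ coincides with the torsion submodule of $\O_{X_\Delta}(D_1+D_2)|_{\overline{C}}$, and the lemma follows.

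The main obstacle is the behavior at the $T$-fixed points, where the inclusion $\mathcal{I}_{\overline{C}} \cdot \O_{X_\Delta}(D_1+D_2) \subseteq \O_{X_\Delta}(D_1+D_2-\overline{C})$ may be strict because the sheaves involved are not locally free; working with fractional ideals and divisor orders in the function field bypasses the tensor-product subtleties, and the ``$/\operatorname{torsion}$'' in the statement is precisely what absorbs this discrepancy.
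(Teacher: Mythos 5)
Your proposal is correct, and it shares the paper's skeleton: both constructions produce the surjection $\cO_{X_\Delta}(D_1+D_2)|_{\overline{C}} \twoheadrightarrow \coker(\xi)$, observe that it is an isomorphism away from the $T$-fixed points so its kernel is torsion, and then finish by proving $\coker(\xi)$ is torsion-free. Where you genuinely differ is in how that last, key step is done. The paper stays inside the toric trivialization: local sections are Laurent polynomials, and if $f\cdot s\in \xi\cdot\cO_{X_\Delta}(D_2)$ with $f$ not vanishing on $\overline{C}$, unique factorization in the Laurent polynomial ring together with irreducibility of $\xi$ forces $\xi\mid s$, after which one checks the quotient has the required vanishing orders along the boundary. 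You instead identify $\xi\cdot\cO_{X_\Delta}(D_2)=\cO_{X_\Delta}(D_1+D_2-\overline{C})$ inside the function field and argue with $\ord_{\overline{C}}$: since $D_1+D_2$ is supported on the boundary, membership in $\cO_{X_\Delta}(D_1+D_2-\overline{C})$ differs from membership in $\cO_{X_\Delta}(D_1+D_2)$ only by the single condition $\ord_{\overline{C}}\geq 1$, which yields torsion-freeness immediately. Your route is more intrinsic — it uses only normality of $X_\Delta$ and the fact that the zero divisor of $\xi$ is the prime divisor $\overline{C}$, not factoriality of the ambient ring — and it makes transparent why only the behavior along $\overline{C}$ matters; the paper's route draws more directly on the toric description that is used elsewhere. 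One caveat: your assertion that the zero divisor of $\xi$ is $\overline{C}$ with multiplicity one needs, beyond irreducibility of the support (your boundary-component argument handles that part correctly, since boundary divisors miss the torus), that $V(\xi)$ is reduced, i.e.\ that $\xi$ is not a proper power; but this is exactly the implicit strengthening of ``$\overline{C}$ irreducible'' that the paper's own proof invokes when it uses irreducibility of $\xi$ as a Laurent polynomial, and it holds in all applications, so it is not a gap relative to the paper.
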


\begin{proof}
The ideal sheaf of $\overline{C}$ is 
\[ I_{\overline{C}}= \xi \cdot \cO_{X_\Delta}(-D_1) \subseteq \cO_{X_\Delta}. \]
Tensoring this with $\cO_{X_\Delta}(D_1 + D_2)$ gives
\[ I_{\overline{C}} \tensor \cO_{X_\Delta}(D_1 + D_2) = \xi \cdot \cO_{X_\Delta}(-D_1) \tensor \cO_{X_\Delta}(D_1 + D_2) \subseteq \xi \cdot\cO_{X_\Delta}(D_2).\]
The quotients of $\cO_{X_\Delta}(D_1 + D_2)$ by these subsheaves now give a surjective morphism
\small
\[ \cO_{X_\Delta}(D_1 + D_2)|_{\overline{C}} = \cO_{X_\Delta}(D_1 + D_2)/ I_{\overline{C}} \tensor \cO_{X_\Delta}(D_1 + D_2) \to \cO_{X_\Delta}(D_1 + D_2)/ \xi \cdot \cO_{X_\Delta}(D_2) = \coker(\xi).\]
\normalsize
This map is an isomorphism away from the $T$-fixed points, so its kernel is a torsion sheaf. Let us check that $\coker(\xi)$ has no torsion; hence the kernel consists of all the torsion. 

We consider a $T$-invariant open affine $U\subseteq X_\Delta$. Let $s$ be a section of $\cO_{X_\Delta}(D_1 + D_2)$ on $U$ such that for some section $f$ of $\cO_{X_\Delta}$ that does not vanish on $\overline{C}$, $f\cdot s$ lies in the subsheaf $\xi \cdot \cO_{X_\Delta}(D_2)$. Recall that we have trivialized all sheaves over the torus, hence we may view all sections as Laurent polynomials. Now, the polynomial $\xi$ divides $f\cdot s$. Since $\xi$ is irreducible and not a unit, it divides $f$ or $s$. In the first case $f$ vanishes on $\overline{C}$, which is not allowed. It follows that $\xi$ must divide $s$. The quotient polynomial is a section of $\cO_{X_\Delta}(D_2)$. This last claim follows because sections of $\cO_{X_\Delta}(D_2)$ are Laurent polynomials with certain vanishing orders at the three boundary divisors. One checks that the quotient has the required orders of vanishing.
\end{proof}

\subsection{Weil divisors on $X$.}

We now fix one triangle $\Delta$ as in Theorem~\ref{thm-main} and compute the class groups of $X_\Delta$ and $X=\Bl_e X_\Delta$ explicitly. These class groups do not depend on the field over which the varieties are defined.

The triangles defining nef $T$-Weil divisors on $X_\Delta$ are determined by their lower left and right vertices $(x_L, y)$, $(x_R,y)$. The integrality condition of the Weil divisors implies that $x_L\in \frac{1}{n_L}\ZZ$, $x_R \in \frac{1}{n_R} \ZZ$ and $y\in \ZZ$ for some nonzero integers $n_L, n_R$. The numbers $n_L, n_R$ are the numerators of the slopes $s=n_L/d_L$, $t=n_R/d_R$ written in lowest terms.
The set of triples 
\[ ( x_L, x_R, y) \in \frac{1}{n_L} \ZZ \times \frac{1}{n_R} \ZZ \times \ZZ\]
is the group of $T$-Weil divisors of $X_\Delta$. 

\begin{lemma} For any triangle $\Delta$ in Theorem~\ref{thm-main}, either $n_L\mid n_R$ or $n_R \mid n_L$.
\end{lemma}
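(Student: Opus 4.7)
The plan is to bypass any case analysis on the numerator and denominator of $\alpha$ by extracting a single algebraic identity between $s$ and $t$ that follows from the fact that the triangle has horizontal base of length $9/7$ and height $7$, and then translating divisibility conditions into $p$-adic valuations.

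First I would read off the slopes. Because $\alpha<0$ and $\alpha+9/7<12/7<3$, the apex $(3,7)$ lies strictly to the right of both base vertices, so both edges have positive slope: $s=7/(3-\alpha)$ for the left edge and $t=7/(3-\alpha-9/7)$ for the right. The horizontal runs $7/s$ and $7/t$ of the two edges differ exactly by the base length $9/7$, so
\[
\frac{7}{s}-\frac{7}{t}=\frac{9}{7}, \qquad\text{equivalently}\qquad 49(t-s)=9st.
\]
Writing $s=n_L/d_L$ and $t=n_R/d_R$ in lowest terms (with all four integers positive) and clearing denominators yields
\[
49(n_R d_L - n_L d_R)=9\,n_L n_R.
\]

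Next I would extract two divisibilities from this identity. Rearranging as $n_L(9n_R+49d_R)=49n_R d_L$ and using $\gcd(n_L,d_L)=1$ forces $n_L\mid 49n_R$; the symmetric rearrangement $n_R(49d_L-9n_L)=49n_L d_R$ together with $\gcd(n_R,d_R)=1$ forces $n_R\mid 49n_L$. Taking $p$-adic valuations, for every prime $p\neq 7$ these two divisibilities give $v_p(n_L)\le v_p(n_R)$ and $v_p(n_R)\le v_p(n_L)$, hence equality. Therefore $n_L$ and $n_R$ share a common prime-to-$7$ part $N$, and we may write
\[
n_L=7^a N,\qquad n_R=7^b N
\]
for some integers $a,b\ge 0$. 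Whichever of $a\le b$ or $b\le a$ holds yields the corresponding divisibility $n_L\mid n_R$ or $n_R\mid n_L$.

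I do not anticipate any serious obstacle: the only subtlety is the sign check at the start (both edges have positive slope because the apex lies to the right of the base), after which the argument is elementary number theory on a single linear identity. It is perhaps worth noting that the proof never uses the precise interval $\alpha\in[-4/15,-16/105]$; the same argument works for every triangle of the form $(\alpha,0),(\alpha+9/7,0),(3,7)$ whose apex lies strictly to the right of its base.
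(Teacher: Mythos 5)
Your proof is correct, but it takes a different route from the paper's. The paper sets $\alpha=p/q$ in lowest terms, writes $s=7q/(3q-p)$ and $t=49q/(7(3q-p)-9q)$, and splits into the two cases $7\nmid q$ (where $n_R=49q$ and $n_L\mid 7q\mid n_R$) and $7\mid q$ (where $n_L=7q$ and, after cancelling a $7$ from $t$, $n_R\mid 7q=n_L$). You instead never touch $\alpha$ directly: from the run--rise relation $7/s-7/t=9/7$ you get the single identity $49(n_Rd_L-n_Ld_R)=9n_Ln_R$, deduce $n_L\mid 49\,n_R$ and $n_R\mid 49\,n_L$ using coprimality of numerator and denominator, and conclude that $n_L$ and $n_R$ have the same prime-to-$7$ part, so they are comparable under divisibility since they differ only by a power of $7$. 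Both arguments are elementary and of comparable length; yours avoids the case split and makes the structural reason transparent (only the prime $7$, coming from the height $7$ and base length $9/7$, can distinguish the two numerators), while the paper's computation has the side benefit of exhibiting $n_L$ and $n_R$ essentially explicitly in terms of $q$, which is in the spirit of its later explicit description of the divisor class group. Your closing remark is accurate, though the same is true of the paper's proof: neither argument uses the interval $[-4/15,-16/105]$, only the base length and apex.
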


\begin{proof}
Let $\alpha=p/q$ in lowest terms. Then the slopes $s$ and $t$ are
\[ s= \frac{7q}{3q-p}, \quad t= \frac{7^2q}{7(3q-p)-9q},\]
not necessarily in lowest terms.

If $7 \nmid q$ then $n_R=7^2 q$, hence $n_L \mid 7q \mid n_R$.
If $7 \mid q$ then $7 \nmid 3q-p$ and hence $n_L = 7q$. We can divide the numerator and denominator of $t$ by $7$. Then $n_R \mid 7q = n_L$.
\end{proof}

We will assume below that $n_L \mid n_R$. If $n_L \nmid n_R$, we can change the roles of the lower left and right vertices. 
With this assumption, the group of $T$-Weil divisors of $X_\Delta$ can be expressed as the set of triples

\[ (b,  x_L, y) \in \frac{1}{n_R} \ZZ \times  \frac{1}{n_L} \ZZ \times \ZZ,\]
where $b = x_R-x_L$ is the length of the base.
The class group of $X_\Delta$ is the quotient by the group $\ZZ\times\ZZ$ acting on $(x_L, y)$ by integral translation.  Thus
\[ \Cl (X_\Delta) \isom \left\{(b, x_L) \in  \frac{1}{n_R} \ZZ \times  \left(\frac{1}{n_L} \ZZ \right)/\ZZ\right\}.\]
The numerical equivalence class of a nef divisor is given by the size of the triangle, that means by $b$.

Let $X=\Bl_e X_\Delta$. Then its class group is 
\[ \Cl(X) = \Cl(X_\Delta) \times \ZZ (-E),\]
where $E$ is the class of the exceptional divisor. We write a class as a triple $(b,x_L, m)$.

For the rest of this section we work over the field $\FF_2$. We start by constructing two irreducible disjoint curves $C$ and $D$ in $X$ with zero self-intersection. Recall that we have fixed a variety $X=\Bl_e X_\Delta$ as in Theorem~\ref{thm-main}, where the triangle $\Delta$ is determined by its left vertex $(\alpha,0)$. Let $\pi:X\to X_\Delta$ be the blowup morphism.

\begin{lemma} \label{lem-xi}
Let $H$ be the $T$-Weil divisor on $X_\Delta$ corresponding to the triangle $\Delta$. Then there exists a nonzero section 
\[\xi\in H^0(X, \cO_X(\pi^* H-3E)) \]
that defines an irreducible curve $C \subseteq X$ with zero self-intersection. The curve $C$ passes through the $T$-fixed points corresponding to the lower left and right vertices of $\Delta$.
\end{lemma}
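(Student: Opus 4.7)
I would prove the lemma by exhibiting $\xi$ explicitly. Sections of $\cO_{X_\Delta}(H)$ are spanned by the characters $\chi^a$ with $a \in \Delta \cap \ZZ^2$. Sweeping over $y = 0, 1, \ldots, 7$ and using the bounds $x_L(y) = \alpha + (3-\alpha)y/7$ together with $x_R(y) - x_L(y) = 9(7-y)/49$ and $\alpha \in [-4/15, -16/105]$, one checks that
\[
\Delta \cap \ZZ^2 = \{(0,0),\ (1,0),\ (1,1),\ (1,2),\ (2,4),\ (3,7)\},
\]
so $h^0(X_\Delta, \cO_{X_\Delta}(H)) = 6$. Imposing vanishing of order three at $e = (1,1)$ amounts to six linear conditions on six coefficients, so the existence of $\xi$ reduces to finding a single relation among these conditions over $\FF_2$.

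My candidate is the sum of all six monomials,
\[
\xi = 1 + x + xy + xy^2 + x^2 y^4 + x^3 y^7 \in \FF_2[x,y].
\]
Setting $x = 1+u$ and $y = 1+v$, the Frobenius identities $(1+u)^{2^k} = 1 + u^{2^k}$ in characteristic two simplify $x^2 y^4 = (1+u^2)(1+v^4)$ and $x^3 y^7 = (1+u)^3(1+v)^7$, and a direct expansion yields
\[
\xi(1+u, 1+v) = u^3 + u^2 v + v^3 + (\text{terms of order} \ge 4).
\]
Hence $\xi$ vanishes to order exactly three at $e$. Writing $\overline{C} = V(\xi) \subset X_\Delta$, its strict transform $C \subset X$ has class $\pi^* H - 3E$, and
\[
C^2 = H^2 - 9 = 2\operatorname{Area}(\Delta) - 9 = 9 - 9 = 0.
\]

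For irreducibility of $\xi$, note that the convex hull of its support is the triangle $N(\xi)$ with primitive edge vectors $(1,0), (3,7), (2,7)$, hence Minkowski indecomposable as a lattice polygon. Any factorization $\xi = \eta\,\nu$ gives $N(\xi) = N(\eta) + N(\nu)$, forcing one factor to be a monomial, which the constant term $1$ of $\xi$ makes a unit; so $\xi$, and therefore $C$, is irreducible. Lastly, the vertices $V_L = (\alpha, 0)$ and $V_R = (\alpha + 9/7, 0)$ are not lattice points, so no monomial in $\xi$ is supported at them; the local description of $\cO_{X_\Delta}(H)$ at the $T$-fixed points $p_{V_L}$ and $p_{V_R}$ then shows that every global section vanishes there, and $C$ passes through both fixed points. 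The only step that depends on the specific choice of triangle is the Taylor expansion, which hinges on a characteristic-two coincidence making the six evaluation conditions linearly dependent; all other steps are structural and apply uniformly to any triangle of the stated shape.
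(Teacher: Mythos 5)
Your proposal is correct and follows essentially the same route as the paper: you exhibit the identical polynomial $\xi = 1+x+xy+xy^2+x^2y^4+x^3y^7$, verify multiplicity exactly $3$ at $e$ by the same Taylor expansion giving $u^3+u^2v+v^3$ mod $2$, prove irreducibility via Minkowski indecomposability of the Newton triangle with primitive edges (the paper cites \cite[Lemma 2.1]{GGK2} for this), compute $C^2 = 2\operatorname{Area}(\Delta)-9=0$, and deduce passage through the two lower $T$-fixed points from the non-integrality of those vertices. The lattice-point enumeration showing $h^0=6$ is a nice extra but does not change the argument.
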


\begin{proof}
Let $\xi$ be the reduction mod $2$ of the polynomial
\[ \tilde{\xi}(x,y) = 1+ x(1+y + y^2) + x^2 y^4 + x^3 y^7 \in \ZZ[x,y].\]
The Newton polygon of $\xi$ is the triangle with vertices $(0,0), (1,0), (3,7)$.
This triangle lies in $\Delta$ as required. It also follows that $\xi$ is irreducible because its Newton polygon has sides of lattice length $1$, and hence is not the Minkowski sum of smaller integral polygons \cite[Lemma 2.1]{GGK2}.  
When changing coordinates, we get
\[ \tilde{\xi}(x+1, y+1) = (4 x^2 + 32 x y + 28 y^2 + 8 x + 14 y + 6) + (x ^3 + 25 x^2 y + 76 x y^2 + 39 y^3) + \text{h.d.t.}\]
Clearly all terms of degree less than $3$ vanish mod $2$, hence $\xi$ defines a global section of $\cO_X(\pi^* H-3E)$. Since $\xi$ has multiplicity exactly $3$ at $e$, the curve $C\subseteq X$ that it defines does not have $E$ as a component and hence is irreducible. The self-intersection number of $C$ is zero:
\[ C\cdot C = 2 Area(\Delta) - m^2 = 0.\]
Because the triangle $\Delta$ is not integral at the  lower two vertices, the curve $C$ passes through the corresponding $T$-fixed points.
\end{proof}

\begin{lemma} \label{lem-zeta}
Let $H'$ be the $T$-Weil divisor on $X_\Delta$ corresponding to the triangle with lower vertices $(0,0), (3,0)$. Then there exists a nonzero section 
\[\zeta\in H^0(X, \cO_X(\pi^* H'-7E)) \]
that defines an irreducible curve $D \subseteq X$ with zero self-intersection. The curve $D$ passes through the $T$-fixed point corresponding to the top vertex of $\Delta$. 
\end{lemma}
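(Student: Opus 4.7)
The plan is to mirror the proof of Lemma \ref{lem-xi}.

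First, I would pin down the triangle $\Delta'$ corresponding to $H'$: its base runs from $(0,0)$ to $(3,0)$ and its two sides have the same slopes $s$ and $t$ as those of $\Delta$. A short computation (equivalently, the automatic condition $2\,\mathrm{Area}(\Delta')=49$ dictated by zero self-intersection) places the apex at $(3t/(t-s),\,49/3)$. Since the $y$-coordinate $49/3$ is not an integer, $\Delta'$ is non-integral at the top vertex, and hence $H'$ fails to be Cartier at the top $T$-fixed point of $X_\Delta$.

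Second, I would exhibit an explicit integer polynomial $\tilde{\zeta}(x,y)\in\ZZ[x,y]$ whose Newton polygon lies inside $\Delta'$ and whose reduction $\zeta\equiv\tilde{\zeta}\pmod 2$ has multiplicity exactly $7$ at $e=(1,1)$. Concretely, after the change of variables $x\mapsto x+1$, $y\mapsto y+1$ one verifies that every monomial of total degree less than $7$ has even coefficient while some degree-$7$ monomial has an odd coefficient. I expect this to be the main obstacle: a suitable $\tilde{\zeta}$ is not canonical and must be produced by a guided computation. A promising strategy is to restrict the Newton polygon to a sub-triangle of $\Delta'$ with all edges of lattice length $1$, so that the available monomials roughly balance the $\binom{8}{2}=28$ constraints of vanishing to order $7$, and then solve the resulting linear system over $\FF_2$.

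Given such $\tilde{\zeta}$, the remaining steps are routine. Since the multiplicity of $\zeta$ at $e$ is exactly $7$, the exceptional divisor $E$ is not a component of the curve $D\subseteq X$ defined by $\zeta$. With the Newton polygon of $\zeta$ chosen to have all edges of lattice length $1$, $\zeta$ is irreducible as a Laurent polynomial by \cite[Lemma 2.1]{GGK2}, hence $D$ is irreducible. The self-intersection is
\[ D^2 = (\pi^* H' - 7E)^2 = (H')^2 - 49 = 2\,\mathrm{Area}(\Delta') - 49 = 0. \]
Finally, $D$ passes through the $T$-fixed point corresponding to the top vertex of $\Delta$ by the same reasoning used at the end of Lemma \ref{lem-xi}: since $\Delta'$ is non-integral at that vertex, the curve is forced to meet the associated $T$-fixed point.
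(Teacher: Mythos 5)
The central claim of the lemma is the \emph{existence} of the section $\zeta$, and this is exactly the step your proposal leaves undone: you acknowledge that producing a suitable $\tilde{\zeta}$ is ``the main obstacle'' and defer it to a guided linear-algebra search, but such a search cannot be expected to succeed on general grounds. Vanishing to order $7$ at $e$ imposes $\binom{8}{2}=28$ linear conditions, while a sub-triangle of $\Delta'$ with all edges of lattice length $1$ contains at most $27$ lattice points (and even the triangle the paper uses, with vertices $(0,0),(3,0),(7,15)$, contains only $26$), so the system is overdetermined and a nonzero solution exists only for special, characteristic-$2$ reasons. The paper supplies the missing idea by an explicit construction: $\zeta(x,y)=x(y-1)\xi^{2}+(xy^{2}-1)^{7}$ over $\FF_2$, where squaring $\xi$ and taking the $7$th power of $xy^{2}-1$ force multiplicity at least $7$ at $e$ for free, and the one monomial ($x^{7}y^{14}$) that would leave the target triangle cancels between the two terms. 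Without this (or some comparable device), the existence statement is not proved, so the proposal has a genuine gap at its core.

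Two smaller points would also need attention even after a candidate polynomial is found. First, you cannot ``choose'' the Newton polygon of a solution; you can only restrict its support, and you must then check that the actual Newton polygon is the full intended triangle (the paper does this by verifying that the coefficients at the three vertices are nonzero) --- otherwise both the irreducibility argument and the claim that $D$ passes through the top $T$-fixed point can fail. Note also that the paper's Newton polygon has a base of lattice length $3$, and irreducibility is deduced from the two primitive slanted edges, not from all edges having lattice length $1$. Second, for multiplicity exactly $7$ the paper does not compute directly: it argues that a higher vanishing order would produce a curve of strictly negative self-intersection, contradicting the irreducible curve $C$ of self-intersection zero from Lemma~\ref{lem-xi}; your plan to read off the exact multiplicity from the explicit coefficients is fine in principle, but again presupposes the polynomial you have not constructed.
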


\begin{proof}
We will construct the polynomial $\zeta$ with Newton polygon equal to the triangle with vertices $(0,0), (3,0), (7,15)$. This triangle lies in the triangle with the same base and sides parallel to $\Delta$. Since the left and right sides of the Newton polygon have lattice length $1$, it follows that such $\zeta$ is irreducible, and since the top vertex of the triangle with sides parallel to $\Delta$ is not integral (it has $y$-coordinate $49/3$), the curve $D$ defined by $\zeta$ passes through the corresponding $T$-fixed point. The self-intersection number of $D$ is zero as before.

Let $\xi$ be as in the previous lemma and define $\zeta$ to be
\[ \zeta(x,y) = x(y-1)\xi^2 + (xy^2-1)^7.\]
Both terms in the sum have all nonzero monomials supported in the triangle defined above, except the monomial $x^7y^{14}$. However, these terms cancel in the sum. This shows that $\zeta$ is supported in the correct triangle. Clearly both terms of the sum vanish to order $7$ at $e=(1,1)$, hence $\zeta$ defines a global section of $\cO_X(\pi^* H'-7E)$. One checks that the coefficients of the monomials corresponding to the vertices $(0,0), (3,0),(7,15)$ are nonzero, which implies that $\zeta$ is nonzero and has the stated Newton polygon.

We also need to check that the curve $D$ in $X$ does not contain $E$ as a component. In other words, that $\zeta$ vanishes to order exactly $7$ at $e$. If the vanishing order were greater, then $\zeta$ would define a negative curve in $X$ with strictly negative self-intersection, and that would contradict the existence of the irreducible curve of zero self-intersection defined by $\xi$.
\end{proof}

Figure~\ref{fig-plot} shows the diamonds of the polynomials $\xi$ and $\zeta$. The point corresponding to any $X$ as in Theorem~\ref{thm-main} lies in the intersection of these two diamonds. It follows from Theorem~\ref{thm-R} below that $\xi$ and $\zeta$ are the only nonzero global sections of their respective sheaves. This gives the uniqueness of the curves $C$ and $D$ in their linear equivalence classes. 

For later use, let us consider $\tilde{\xi} \in \ZZ[x,y]$, the lifting of $\xi$, and write    
\[ \tilde{\xi}(x+1, y+1) = 2f_1 + g_1,\]
where $f_1$ consists of terms of degree less than $3$ and $g_1$ of degree $3$ or greater. The proof of Lemma~\ref{lem-xi} shows that
\begin{align*} f_1 & \equiv y+1 \pmod 2,\\
 g_1 & \equiv x^3 + x^2y +y^3 + \text{ higher degree terms} \pmod 2. 
 \end{align*} 
 Similarly, we lift $\zeta$ to a polynomial $\tilde{\zeta}\in \ZZ[x,y]$ and write
\[ \tilde{\zeta}(x+1,y+1) = 2 f_2 + g_2,\]
where $f_2$ has terms of degree less than $7$ and $g_2$ of degree at least $7$. The lowest degree terms of $g_2$ mod $2$ lie in degree $7$.

Let us study the curve $C = V(\xi)\subseteq X$ further. Its intersection number with the exceptional divisor $E$ is $3$. This intersection is defined by the vanishing of the lowest degree terms in $\xi(x+1,y+1)$, namely the homogeneous polynomial $x^3 + x^2y +y^3$. If we dehomogenize it, we get a single variable irreducible polynomial $t^3+t+1 \in \FF_2[t]$. Thus, if we consider $C$ as a scheme over $\FF_2$, its intersection with $E$ is a single point with residue field $\FF_8$. On the other hand, if we consider $C$ as a variety over $\overline{\FF}_2$, then this point splits into three distinct points. Since these are simple points on $E$, it follows that $C$ is smooth at these points and intersects $E$ transversely.

\subsection{A section ring of $X$.}

We continue working over $\FF_2$. Consider the extremal ray of the cone of curves of $X$ that contains $C$ and $D$. The elements of $\Cl(X)$ that have their numerical equivalence classes on this ray satisfy  
\[ b = \frac{3}{7} m \geq 0.\]
The equality here is the condition that $m^2$ equals twice the area of the triangle, hence the class has self-intersection zero. The inequality restricts this line to the ray of nef divisors.
We may write this subset of $Cl(X)$ as
\[ \left\{ (b, x_L, m) \in \Cl(X) \,|\, b = \frac{3}{7} m \geq 0\right\} \isom
 \left\{ (x_L,m) \in \left(\frac{1}{n_L} \ZZ\right)/\ZZ \times \ZZ_{\geq 0}\right\}.\]
Let us choose Weil divisors $D_1, D_2$ such that the classes of $\pm D_1$ and $D_2$  generate this semigroup:
\[ [D_1] = \left(x_L=\frac{1}{n_L}, m=0\right), \quad [D_2]=  (x_L=0, m=1).\]
The triangle of the divisor $D_1$ is the point $(1/n_L, 0)$. The triangle of $D_2$ has lower vertices $(0,0), (3/7,0)$.

\begin{definition} \label{def-R}
Define the section ring 
\begin{align*} R &= \bigoplus_{m_1\in \ZZ, m_2 \in \ZZ_{\geq 0}} H^0(X, \cO_X(m_1 D_1 + m_2 D_2)) \\
 &= \bigoplus_{x_L\in \frac{1}{n_L} \ZZ, m\in \ZZ_{\geq 0}} H^0(X, \cO_X(x_L D_1 + m D_2)).
 \end{align*}
\end{definition}

More concretely, $R$ is a graded ring. To find its elements of degree $(x_L, m)$, we consider the triangle with left vertex $(x_L, 0)$ and base of length $\frac{3}{7}m$. Then the homogeneous part of degree $(x_L, m)$ in $R$ consists of all Laurent polynomials in $\FF_2[x^{\pm 1},y^{\pm 1}]$ that are supported in the triangle and vanish to order at least $m$ at $e=(1,1)$. 

Note that $\xi$ and $\zeta$ are homogeneous elements of $R$. The element $\xi$ lies in degree $(x_L= \alpha, m=3)$, and $\zeta$ lies in degree $(x_L=0, m=7)$. The polynomial $x$ lies in degree $(x_L=1, m=0)$. 

\begin{theorem} \label{thm-R}
The elements $\xi$, $\zeta$ and $x$ of $R$ are algebraically independent over $\FF_2$ and
\[ R \isom \FF_2 [\xi, \zeta, x^{\pm 1}].\]
\end{theorem}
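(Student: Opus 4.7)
The plan has two parts.

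\textbf{Algebraic independence.} Suppose $P \in \FF_2[X, Z, T^{\pm 1}]$ satisfies $P(\xi, \zeta, x) = 0$ in $R$. Under the bigrading with $X, Z, T$ in degrees $(\alpha, 3)$, $(0, 7)$, $(1, 0)$, decompose $P$ into homogeneous components; each evaluates to zero in its graded piece. The evaluation $\sum c_{a,b,c}\, \xi^a \zeta^b x^c$, viewed as a Laurent polynomial in $\FF_2[x^{\pm 1}, y^{\pm 1}]$, vanishes. Since $\xi$ and $\zeta$ are distinct irreducibles in this UFD (their Newton polygons differ, by Lemmas~\ref{lem-xi} and~\ref{lem-zeta}) both coprime to the unit $x$, unique factorization gives distinct Laurent polynomials for distinct triples $(a, b, c)$. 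Over $\FF_2$ these are linearly independent, so every $c_{a,b,c}$ vanishes.

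\textbf{Generation.} I would show by induction on $m$ that each $R_{(x_L, m)}$ is spanned over $\FF_2$ by the monomials $\xi^a \zeta^b x^c$ of its degree. The base case $m = 0$ is direct since $\Delta_{x_L, 0}$ is the single point $(x_L, 0)$: $R_{(x_L, 0)} = \FF_2 \cdot x^{x_L}$ if $x_L \in \ZZ$ and is zero otherwise.

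For the inductive step with $m \geq 1$, the key claim is that any nonzero $s \in R_{(x_L, m)}$ is divisible, as a Laurent polynomial, by $\xi$ or $\zeta$. If $s = \xi s'$, then by the Weil-divisor computation $V(s') = V(s) - V(\xi)$ is effective of class $(x_L - \alpha) D_1 + (m-3) D_2$, so $s' \in R_{(x_L - \alpha, m-3)}$; similarly for $\zeta$. A Newton-polygon check (comparing the top vertex $(3,7)$ of $N(\xi)$, respectively $(7,15)$ of $N(\zeta)$, against the height $7m/3$ of $\Delta_{x_L, m}$) shows $\xi \nmid s$ when $m < 3$ and $\zeta \nmid s$ when $m < 7$, so the reductions stay within the valid index range. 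By the inductive hypothesis $s'$ lies in $\FF_2[\xi, \zeta, x^{\pm 1}]$, and hence so does $s$. To establish the divisibility claim, observe that the divisor $V(s)$ on $X$ has nef class of self-intersection zero. The standard argument (using $V(s)^2 = \sum n_i V(s) \cdot C_i = 0$ and each $V(s) \cdot C_i \geq 0$ by nefness) shows every irreducible component $C_i$ satisfies $V(s) \cdot C_i = 0$ and $C_i^2 \leq 0$. A component coming from an irreducible Laurent factor of $s$ passes through $e$, since $s$ vanishes at $e$ to order $\geq m \geq 1$; in characteristic $2$ the only irreducible curves on $X$ of nonpositive self-intersection through $e$ are $V(\xi)$ and $V(\zeta)$, so some factor of $s$ is one of these. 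A $T$-invariant boundary divisor, pulled back from $X_\Delta$, cannot appear as a component because $V(s) \cdot \pi^* D_e = m \cdot D_2' \cdot D_e > 0$ when $m \geq 1$.

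\textbf{Main obstacle.} The main obstacle is the classification, in characteristic $2$, of irreducible curves on $X$ of nonpositive self-intersection passing through $e$: the assertion that $V(\xi)$ and $V(\zeta)$ exhaust them. This is essentially the diamond-picture assertion for our region of parameters and demands a careful Newton-polygon analysis of potential negative curves on $X$. Granted this, the inductive generation argument above completes the proof.
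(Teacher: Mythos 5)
Your \textbf{generation} step rests on two claims that do not hold. First, the ``key claim'' that every nonzero homogeneous $s\in R_{(x_L,m)}$ with $m\geq 1$ is divisible, as a Laurent polynomial, by $\xi$ or $\zeta$ is false: in any degree whose triangle is integral (i.e.\ $x_L=l\in\ZZ$ and $7\mid m$) the monomials $x^l\xi^i\zeta^j$ with $i>0$ and the monomial $x^l\zeta^{m/7}$ coexist, and a combination such as $\zeta^6+x^3\xi^{14}$ (a legitimate element of $R_{(0,42)}$ when $\alpha=-3/14$) is divisible by neither generator, since $\xi$ and $\zeta$ are distinct irreducibles. Second, the classification you invoke --- that $V(\xi)$ and $V(\zeta)$ are the \emph{only} irreducible curves of nonpositive self-intersection through $e$ in characteristic $2$ --- is not only unproven (you flag it as the main obstacle) but very likely false: an irreducible element of $R$ of higher degree, such as the example above if it is irreducible as a Laurent polynomial, defines another irreducible curve of self-intersection zero through $e$. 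And even granting some classification, your route is circular in spirit: knowing that the component of $V(s)$ through $e$ lies on the extremal ray only says its equation is an irreducible element of $R$, which is what the theorem is trying to describe. The paper needs none of this. It uses only that $V(s)\cdot C=V(s)\cdot D=0$ together with the fact (Lemmas~\ref{lem-xi} and~\ref{lem-zeta}) that $C$ passes through the two lower $T$-fixed points and $D$ through the top one: if the Newton polygon of $s$ misses the top vertex of the degree triangle then $\zeta\mid s$; if it misses a lower vertex then $\xi\mid s$; and if it contains all three vertices the triangle is integral, so one first subtracts the monomial $x^l\zeta^{m/7}$ and then divides by $\xi$, and induction on $m$ terminates at the monomials $x^l$.

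Your \textbf{independence} step also has a gap: ``distinct Laurent polynomials, hence linearly independent over $\FF_2$'' is a non sequitur --- distinct elements of an $\FF_2$-vector space need not be independent (in $\FF_2[t]$ the monomials $t^2$, $(t+1)^2$ and $1$ in the irreducibles $t$, $t+1$ and the unit $1$ are pairwise distinct yet sum to zero). Linear independence of the monomials $x^l\xi^i\zeta^j$ within a fixed degree is precisely what must be argued. The paper does this by restricting Newton polygons to the $x$-axis: the $y$-degree-zero parts of $\xi$ and $\zeta$ are supported on $[0,1]$ and $[0,3]$ with nonzero endpoint coefficients, so the $y$-degree-zero part of $x^l\xi^i\zeta^j$ is supported on the interval $[\,x_L-\alpha i,\ x_L-\alpha i+i+3j\,]$, whose left endpoint strictly increases with $i$ because $\alpha<0$; induction on $i$ then kills all coefficients of any putative relation. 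You would need to supply an argument of this kind (or an equivalent valuation/degree argument) to close this half as well.
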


\begin{proof}
Let $s$ be a homogeneous element of $R$ in degree $(x_L, m)$. If the top vertex of the triangle corresponding to this degree does not lie in the Newton polygon of $s$, then the curve $V(s)$ passes through the corresponding $T$-fixed point. We know that $D= V(\zeta)$ also passes through this point. Since $D\cdot V(s) = 0$ and $D$ is irreducible, it follows that $\zeta$ divides $s$ as a Laurent polynomial, and hence it divides $s$ in the ring $R$. 

Similarly, if the Newton polygon of $s$ does not contain the left or the right vertex of the triangle, then $\xi$ divides $s$ in $R$. 

If all three vertices lie in the Newton polygon of $s$, then the triangle must be integral. This implies that $x_L=l\in \ZZ$ and $m=7n$ for some $n\in \ZZ$. Now, $x^l \zeta^n$ is supported in the same triangle and the difference $s-x^l \zeta^n$ has Newton polygon that does not contain the left and right vertices; hence we can continue dividing by $\xi$. 

By induction, this process stops when $m=0$. In this case the triangle is reduced to a point with coordinates $(x_L, y=0)$. In this degree there is a nonzero global section if and only if the point is integral, $x_L=l\in\ZZ$. Then $x^l$ is the unique nonzero global section. 

The previous argument shows that every element $s\in R$ can be written as a polynomial in $\xi,\zeta, x^{\pm 1}$. Hence the graded morphism 
\[ \FF_2 [\xi, \zeta, x^{\pm 1}] \to R\]
is surjective. We will show that it is also injective. Since the morphism is graded, its kernel must be homogeneous. Suppose the map in degree $(x_L, m)$ is not injective. The degree $(x_L, m)$ part of the left hand side has a basis consisting of all monomials
\[ x^l\xi^i\zeta^j, \quad l,i,j\in \ZZ, i,j \geq 0\]
such that $3i+7j=m$ and $l+\alpha i = x_L$. We will consider the Newton polygons of these monomials restricted to the $x$-axis, $y=0$. The Newton polygons of $\xi$ and $\zeta$ restrict to intervals $[0,1]$ and $[0,3]$, respectively. The Newton polygon of $x$ is simply the point $1$. From this we can compute the Newton polygon of the monomial to be
\[ [l,l]+[0,i+3j] = \left[x_L-\alpha i , x_L- \alpha i \right] + [0,i+3j].\]
Notice that the left endpoints of these intervals increase with $i$ since $\alpha <0$. Hence if there is a relation among the monomials, then one proves by induction on $i$ that the coefficients in front of the monomials must all be zero.
\end{proof}

\section{The module $M$ of first cohomologies.} \label{sec-3}
We fix a variety $X$ as in Theorem~\ref{thm-main}. 
In the previous section, we defined the ring $R$ of zeroth cohomologies of certain sheaves. In this section we define similarly the space $M$ of first cohomologies of the same sheaves. This $M$ has the structure of an $R$-module. In the next section we prove that no element of $R$ can be lifted to characteristic zero. The obstruction to lifting lies in the module $M$. 

We work over the field $\FF_2$ throughout this section. The computation of cohomology in the next subsection works over any field.

\subsection{The first cohomology} 
Let $H$ be a $T$-Weil divisor on $X_\Delta$, $m\geq 0$, and $\pi: X\to X_\Delta$ the blowup morphism. We consider the divisor 
\[ D = \pi^* H -mE \]
on $X$. Global sections of $\cO_X(D)$ can be identified with global sections of $\cO_{X_\Delta}(H)$ that vanish to order at least $m$ at $e=(1,1)$. Let us write this more formally.

Let $m\cdot e \subset X_\Delta$ be the closed subscheme of the torus $T$ defined by the ideal $(x-1, y-1)^m$. Consider the restriction of the sheaf $\cO_{X_\Delta} (H)$ to the subscheme $m\cdot e$:
\[ \cO_{X_\Delta}(H) \to \cO_{X_\Delta} (H)|_{m\cdot e} = \cO_{m\cdot e},\]
where the last equality follows from the trivialization of the sheaf $\cO_{X_\Delta}(H)$ on the torus. This morphism induces a morphism of global sections
\[ \phi: H^0(X_\Delta, \cO_{X_\Delta}(H)) \to H^0 (X_\Delta, \cO_{m\cdot e}),\]
which can be described more concretely as follows. A global section $s$ on the left hand side is a Laurent polynomial in $x$ and $y$. The map $\phi$ takes it to
\[ \phi: s(x,y) \mapsto s(x+1,y+1) \in \FF_2[x,y]/(x,y)^m.\]

\begin{lemma} \label{lem-cohom}
The kernel and cokernel of $\phi$ are $H^0(X, \cO_X(D))$ and $H^1(X, \cO_X(D))$, respectively.
\end{lemma}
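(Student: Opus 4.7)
My plan is to derive the identifications by taking cohomology of a short exact sequence on $X_\Delta$ and using the projection formula for the blowup $\pi\colon X\to X_\Delta$.

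Since $e=(1,1)$ lies in the smooth torus $T$, the sheaf $\cO_{X_\Delta}(H)$ is locally free near $e$, and the canonical trivialization on $T$ identifies its restriction to $m\cdot e$ with $\cO_{m\cdot e}$. I would therefore start from the short exact sequence
\[
0 \to \mathcal{I}_e^m\otimes \cO_{X_\Delta}(H) \to \cO_{X_\Delta}(H) \to \cO_{m\cdot e} \to 0
\]
of coherent sheaves on $X_\Delta$, whose right-hand arrow sends a Laurent polynomial $s(x,y)$ to the class of $s(x+1,y+1)$ modulo $(x,y)^m$; on global sections this is precisely the map $\phi$ of the lemma. Applying the projection formula together with the standard facts $\pi_*\cO_X(-mE)=\mathcal{I}_e^m$ and $R^1\pi_*\cO_X(-mE)=0$ for $m\geq 0$ (valid because $\pi$ is the blowup of a smooth point $e\in T$) yields
\[
\pi_*\cO_X(D)=\mathcal{I}_e^m\otimes \cO_{X_\Delta}(H), \qquad R^1\pi_*\cO_X(D)=0,
\]
and the Leray spectral sequence then collapses to give $H^i(X_\Delta,\mathcal{I}_e^m\otimes\cO_{X_\Delta}(H))=H^i(X,\cO_X(D))$ for every $i\geq 0$.

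Splicing these identifications into the long exact cohomology sequence of the short exact sequence above produces
\[
0 \to H^0(X,\cO_X(D)) \to H^0(X_\Delta,\cO_{X_\Delta}(H)) \xrightarrow{\phi} H^0(X_\Delta,\cO_{m\cdot e}) \to H^1(X,\cO_X(D)) \to H^1(X_\Delta,\cO_{X_\Delta}(H)) \to \cdots .
\]
The identification $\ker\phi=H^0(X,\cO_X(D))$ is then immediate, while the cokernel identification $\coker\phi=H^1(X,\cO_X(D))$ follows as soon as one verifies the toric vanishing $H^1(X_\Delta,\cO_{X_\Delta}(H))=0$.

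The main obstacle is precisely this last vanishing for an arbitrary $T$-Weil divisor $H$ on $X_\Delta$. I would establish it in one of two ways: either by exploiting the presentation of $X_\Delta$ as a finite abelian quotient of $\PP^2$ mentioned in the introduction and reducing to the well-known vanishing $H^1(\PP^2,\cL)=0$ for line bundles on $\PP^2$ after taking $G$-invariants, or by invoking the character-by-character toric cohomology formula: for a complete fan with only three rays the combinatorial subcomplex $Z_u$ attached to each character $u\in M$ is either empty or a single connected arc of $S^1$, so its reduced cohomology $\tilde{H}^0$ vanishes and hence the graded piece $H^1(X_\Delta,\cO_{X_\Delta}(H))_u$ is zero for every $u$.
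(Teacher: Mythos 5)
Your argument is correct, and it reaches the same two pillars as the paper's proof — a four-term exact sequence whose middle map is $\phi$, plus the toric vanishing $H^1(X_\Delta,\cO_{X_\Delta}(H))=0$ — but it gets there by a different decomposition. The paper stays on the blowup $X$ and uses $0\to\cO_X(D)\to\cO_X(\pi^*H)\to\cO_X(\pi^*H)|_{m\cdot E}\to 0$, so no direct-image machinery is needed: one only identifies $H^0(X,\cO_X(\pi^*H))$ with $H^0(X_\Delta,\cO_{X_\Delta}(H))$ and the sections over $m\cdot E$ with $k[x,y]/(x,y)^m$, and then quotes vanishing for the nef divisor $H$. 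You instead work downstairs with $0\to \mathcal{I}_e^m\otimes\cO_{X_\Delta}(H)\to\cO_{X_\Delta}(H)\to\cO_{m\cdot e}\to 0$ and transfer cohomology to $X$ via $\pi_*\cO_X(-mE)=\mathcal{I}_e^m$, $R^1\pi_*\cO_X(-mE)=0$, the projection formula and Leray; this is fine, and your opening observation that $\cO_{X_\Delta}(H)$ is trivial (in particular locally free) near $e$ is exactly what makes the projection formula and the Tor-vanishing for the ideal-power sequence legitimate despite $\cO_{X_\Delta}(H)$ being only reflexive — the reflexivity issues live at the $T$-fixed points, where $\pi$ is an isomorphism. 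Two small remarks on the vanishing step. Your combinatorial route is the robust one and is actually slightly stronger than what the paper uses: for a complete fan with three rays the set $Z_u$ can be empty, a point, an arc, or the whole circle, but it is always empty or connected, so $\tilde H^{0}(Z_u)=0$ and $H^1(X_\Delta,\cO_{X_\Delta}(D))=0$ for \emph{every} $T$-Weil divisor, nef or not, over any field (your phrase ``empty or a single arc'' misses the full-circle case, but that case only affects $H^2$). By contrast, the $\PP^2$-quotient route should be treated with caution here: the lemma is needed over $\FF_2$, and when the order of the abelian group is divisible by the characteristic the identification of $X_\Delta$ as a quotient involves non-reduced group schemes and the averaging/trace argument for passing to $G$-invariants in cohomology breaks down, so that alternative would need extra care or should be dropped in favor of the character-by-character computation.
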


\begin{proof}
There is an exact sequence of sheaves on $X$
\[ 0 \to \cO_X(D) \to \cO_X(\pi^* H) \to \cO_X(\pi^* H)|_{m\cdot E} \to 0,\]
where $m\cdot E \subseteq X$ is the $m$-fold thickening of $E$. This gives an exact sequence of cohomology
\[ 0 \to H^0(X, \cO_X(D)) \to H^0(X, \cO_X(\pi^* H)) \to H^0(X, \cO_X(\pi^* H)|_{m\cdot E}) \to H^1(X, \cO_X(D)) \to 0.\]
The sequence ends on the right because $H$ is a nef $\QQ$-Cartier divisor on a toric variety, hence its higher cohomology vanishes \cite{Fulton}:
\[H^1 (X, \cO_X(\pi^* H)) = H^1(X_\Delta, \cO_{X_\Delta} (H)) = 0.\]
In the exact sequence of global sections, the middle map can be identified with $\phi$.
\end{proof}

\begin{remark} \label{rem-int}
Lemma~\ref{lem-cohom} was stated for the variety $X$ over the field $\FF_2$, but clearly it holds for $X$ over any field. Note also that we can represent the map $\phi$ by an integer matrix that does not depend on the field. This is done by choosing the basis of monomials in both source and target. In Section~\ref{sec-4} below, we will in fact define $\phi$ over any base ring $S$. Working over an arbitrary field, the kernel and cokernel of $\phi$ are defined over the prime field, $\FF_2$ in characteristic $2$ and $\QQ$ in characteristic $0$. Consider the case where $\ker \phi$ is nonzero over a field of characteristic $0$. Then we may choose a nonzero element in the kernel as a polynomial with rational coefficients, and after clearing denominators, we may assume further that the coefficients are integers with $\gcd$ of all coefficients equal to $1$. Such a polynomial defines a nonzero element in $H^0(X, \cO_X(D))$ in characteristic $0$ that reduces to a nonzero element in characteristic $2$. 
\end{remark}

\subsection{The module $M$.}

Recall that we defined the section ring $R$ graded by $(x_L\in \frac{1}{n_L}\ZZ, m\in \ZZ_{\geq 0})$:
\[R = \bigoplus_{x_L, m} H^0(X, \cO_X(x_L D_1 + m D_2)). \]
We define similarly the space of first cohomologies:
\[M = \bigoplus_{x_L, m} H^1(X, \cO_X(x_L D_1 + m D_2)). \]
The graded vector spaces $R$ and $M$ can be put into an exact sequence
\[ 0\to R \to \bigoplus_{x_L, m} H^0(X_\Delta, \cO_{X_\Delta}(H_{x_L,m})) \stackrel{\phi}{\to} \bigoplus_{x_L, m} H^0 (X_\Delta, \cO_{m\cdot e}) \to M \to 0,\]
where $H_{x_L,m}$ is a $T$-Weil divisor on $X_\Delta$ such that 
\[ x_L D_1 + m D_2 = \pi^* H_{x_L,m} -mE.\]

Let now $s$ be a homogeneous element of $R$ in degree $(x_L^0, m^0)$. Then multiplication with $s$ defines a graded morphism of degree $(x_L^0, m^0)$ of the exact sequence above:
\begin{equation} \label{eq-comm}
\begin{tikzcd}[column sep=1.7em]
0  \arrow[r] &  R  \arrow{d}{s(x, y)} \arrow[r] &\bigoplus H^0(X_\Delta, \cO_{X_\Delta}(H_{x_L,m})) \arrow{d}{s(x, y)} \arrow{r}{\phi} &  \bigoplus H^0 (X_\Delta, \cO_{m\cdot e}) \arrow{d}{s(x+1, y+1)}\arrow[r] & M \arrow{d}{s(x+1, y+1)} \arrow[r] & 0 \\
0  \arrow[r] &  R   \arrow[r] &\bigoplus H^0(X_\Delta, \cO_{X_\Delta}(H_{x_L,m}))  \arrow{r}{\phi} &  \bigoplus H^0 (X_\Delta, \cO_{m\cdot e})  \arrow[r] & M  \arrow[r] & 0.
\end{tikzcd}
\end{equation}
These multiplication maps give $M$ the structure of a graded $R$-module. 
Recall that $R=\FF_2[\xi,\zeta,x^{\pm 1}]$, where $\xi$ and $\zeta$ are defined in Lemma~\ref{lem-xi} and Lemma~\ref{lem-zeta}, 
\[ \xi(x+1,y+1) =  g_1(x,y), \quad \zeta(x+1,y+1) = g_2(x,y).\]
The action of $\xi$ and $\zeta$ on $M$ is by multiplication with $g_1$ and $g_2$, respectively. The variable $x$ acts by multiplication with $x+1$. The inverse $x^{-1}$ acts by multiplication with $1-x+x^2-\ldots$. This is well defined because on each graded piece of $M$ multiplication with $x^l$ is  zero for $l$ large enough.

\begin{theorem} \label{thm-free}
The module $M$ is a graded free $R$-module.
\end{theorem}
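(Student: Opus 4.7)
The plan is to exhibit an explicit homogeneous $R$-basis of $M$, proving generation by graded Nakayama modulo the ideal $(\xi,\zeta)$ and independence by a leading-term argument with respect to the $(x,y)$-adic filtration on the ambient module $B=\bigoplus_{x_L, m} \FF_2[x,y]/(x,y)^m$. The structural facts I would exploit are: $x\in R$ acts on each $M_{(x_L,m)}$ by multiplication by $1+x$, which is a unit in $\FF_2[x,y]/(x,y)^m$, so $x^{\pm 1}$ is invertible on $M$; and $\xi,\zeta$ act by multiplication by $g_1, g_2$, whose $(x,y)$-adic leading forms are of degrees $3$ and $7$, respectively, with $\tilde{g}_1=x^3+x^2y+y^3$ irreducible over $\FF_2$.

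First, I would compute $\bar{M}:=M/(\xi M+\zeta M)$ as a graded $\FF_2[x^{\pm 1}]$-module. In each degree $(x_L,m)$ this amounts to quotienting $N_{(x_L,m)}=\FF_2[x,y]/(x,y)^m$ by the image of $\phi_{(x_L,m)}$, together with the image of $g_1$-multiplication from $N_{(x_L-\alpha,m-3)}$ and the image of $g_2$-multiplication from $N_{(x_L,m-7)}$. I would fix a homogeneous $\FF_2$-basis of $\bar{M}$, lift it to a set $\{b_\lambda\}\subset M$, and prove generation by a graded Nakayama induction on $m$: for homogeneous $s\in M_{(x_L,m)}$, the image $\bar{s}\in\bar{M}_{(x_L,m)}$ is an $\FF_2[x^{\pm 1}]$-combination of the appropriate $b_\lambda$, and subtracting yields an element of $\xi M+\zeta M$, to which the induction hypothesis applies.

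The main work is $R$-linear independence. Suppose $\sum_\lambda r_\lambda b_\lambda=0$ is a nonzero homogeneous relation of minimal total $m$-degree. I would extract its leading part under the $(x,y)$-adic filtration on $B$: in this filtration, $\xi$ and $\zeta$ shift filtration degree by at least $3$ and $7$, respectively, while $x^{\pm 1}$ preserves filtration degree. Passing to the associated graded module $\mathrm{gr}(M)$, the relation descends to one over the graded ring $\FF_2[\tilde{g}_1,\tilde{g}_2,x^{\pm 1}]$. The plan is then to show, by a Hilbert function comparison, that $\mathrm{gr}(M)$ is free at this associated-graded level, and to use freeness of the associated graded to deduce freeness of $M$ itself.

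The principal obstacle is this last step. Describing $\mathrm{gr}(M)$ explicitly is delicate because the filtration interacts nontrivially with the triangle-support condition cutting out the image of $\phi$: the Taylor expansion at $e=(1,1)$ of a Laurent polynomial supported in a given triangle does not admit a clean description purely in terms of the $(x,y)$-adic degree. Controlling both source and target of $\phi$ under this filtration, and using the specific numerics of $\Delta$ together with the explicit polynomials $g_1, g_2$ (in particular the irreducibility of $\tilde{g}_1$) to match the Hilbert function of $\mathrm{gr}(M)$ term by term with that of a free module over $\FF_2[\tilde{g}_1,\tilde{g}_2,x^{\pm 1}]$, is where the detailed linear algebra of the specific triangle $\Delta$ enters.
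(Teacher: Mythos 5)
Your proposal is a strategy outline rather than a proof, and the essential content of the theorem is exactly the part you leave unresolved. Generation of $M$ by lifts of a basis of $M/(\xi M+\zeta M)$ via graded Nakayama is unproblematic (the $m$-grading is bounded below and $\xi,\zeta$ have positive $m$-degree), and your observation that $x$ acts invertibly matches the structure of the actual argument. But freeness is equivalent to the $R$-linear independence of those lifts, and your plan for independence -- pass to the $(x,y)$-adic associated graded, show $\mathrm{gr}(M)$ is free over $\FF_2[\tilde{g}_1,\tilde{g}_2,x^{\pm 1}]$ by a Hilbert function comparison, then lift freeness back to $M$ -- is precisely what you admit you cannot carry out. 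The difficulty is real: $M$ is a quotient of $\bigoplus \FF_2[x,y]/(x,y)^m$ by the image of $\phi$, and the associated graded of such a quotient is governed by the full initial module of $\operatorname{image}(\phi)$, not by the leading forms of any convenient generators; as you note, the triangle-support condition on the source of $\phi$ has no clean description in the $(x,y)$-adic filtration. A Hilbert function comparison would moreover require closed-form dimension counts (lattice points of non-integral triangles depending on $\alpha$, in infinitely many degrees $(x_L,m)$), and nothing in the proposal produces them or the matching count for $\mathrm{gr}(M)$. So there is a genuine gap: no argument is given that the relation $\sum_\lambda r_\lambda b_\lambda=0$ forces all $r_\lambda=0$, nor that $\mathrm{gr}(M)$ is free, nor how freeness would descend from $\mathrm{gr}(M)$ to $M$ with this filtration.

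The paper takes a different and essentially geometric route that avoids all Hilbert function data: it shows $\xi,\zeta$ is an $M$-regular sequence (which for this graded situation gives freeness over $\FF_2[\xi,\zeta]$, and then freeness over $R$ since $x$ acts by a graded isomorphism). Regularity of $\xi$ is proved by identifying the cokernel of multiplication by $\xi$ on the toric side with sections of $\cO_{X_\Delta}(H)|_{\overline{C}}/\mathrm{Torsion}$ on the irreducible curve $\overline{C}=V(\xi)$, and using that over $\overline{\FF}_2$ the strict transform $C$ meets $E$ transversely in $3$ points, so vanishing on $m\cdot e\cap\overline{C}$ imposes $3m$ conditions on sections of a sheaf of degree $3m$; the Cartier/non-Cartier dichotomy at the $T$-fixed points matches exactly the degrees in which $R/(\xi)$ is nonzero. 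Regularity of $\zeta$ on $M/\xi M$ uses the disjointness of $C$ and $D$ (multiplication by $\zeta$ is locally $t^7$ along $C\cap E$) and the fact that $\overline{C}\cap\overline{D}$ is a complete intersection supported at $e$. If you want to salvage your approach, you would need either this kind of geometric input or a complete computation of the initial module of $\operatorname{image}(\phi)$ in every degree; as written, the independence half of freeness is missing.
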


\subsection{Proof of Theorem~\ref{thm-free}.}

We show that $\xi,\zeta$ form an $M$-regular sequence. This implies that $M$ is a graded free $\FF_2[\xi,\zeta]$-module. The element $x$ acts on $M$ by a graded isomorphism. This action turns $M/(\xi,\zeta)M$ into a free $\FF_2[x^{\pm 1}]$-module, which implies that $M$ is a free $\FF_2[\xi,\zeta, x^{\pm 1}]$-module.

\begin{lemma} \label{lem-reg-xi}
The element $\xi\in R$ is not a zero-divisor on $M$.
\end{lemma}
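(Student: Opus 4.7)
The plan is to reinterpret $\ker(\xi\mid M)$ as a cokernel of global sections on the curve $C=V(\xi)\subset X$, then to analyze it by passing to the normalization $\tilde C$. Applying the analogue on $X$ of Lemma~\ref{lem-tor} to the irreducible curve $C$, multiplication by $\xi$ fits into the short exact sequence
\[
0\longrightarrow\cO_X(D_{x_L,m})\overset{\xi}{\longrightarrow}\cO_X(D_{x_L+\alpha,m+3})\longrightarrow\tilde Q\longrightarrow 0,
\]
where $\tilde Q:=\cO_X(D_{x_L+\alpha,m+3})|_C/\mathrm{torsion}$. The resulting long exact sequence
\[
R_{x_L,m}\overset{\xi}{\longrightarrow}R_{x_L+\alpha,m+3}\longrightarrow H^0(X,\tilde Q)\longrightarrow M_{x_L,m}\overset{\xi}{\longrightarrow}M_{x_L+\alpha,m+3}
\]
identifies $\ker(\xi\mid M_{x_L,m})$ with the cokernel of the canonical inclusion $R_{x_L+\alpha,m+3}/\xi R_{x_L,m}\hookrightarrow H^0(X,\tilde Q)$, so the lemma reduces to showing this inclusion is surjective in every degree.

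The left-hand side is explicit from Theorem~\ref{thm-R}: in the monomial basis $x^l\xi^i\zeta^j$, the quotient $R_{x_L+\alpha,m+3}/\xi R_{x_L,m}$ is spanned by the classes of monomials with $i=0$, so it is one-dimensional with generator $x^{x_L+\alpha}\zeta^{(m+3)/7}$ exactly when $7\mid m+3$ and $x_L+\alpha\in\ZZ$, and zero otherwise. To bound the right-hand side I would pass to the normalization $n:\tilde C\to C$. A direct intersection computation on $X$ gives $C\cdot D_{x_L+\alpha,m+3}=0$, so $\tilde L:=n^*\cO_X(D_{x_L+\alpha,m+3})$ has degree zero on $\tilde C$; since $\tilde Q$ is torsion-free of rank one on the reduced curve $C$, the adjunction $\tilde Q\hookrightarrow n_*\tilde L$ yields $\dim H^0(X,\tilde Q)\leq\dim H^0(\tilde C,\tilde L)\leq 1$. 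In the case $7\mid m+3$ and $x_L+\alpha\in\ZZ$, the identity $\zeta|_C=(xy^2-1)^7|_C$ from Lemma~\ref{lem-zeta}, together with the fact that the irreducible Laurent polynomial $\xi$ does not divide $xy^2-1$ (by Newton polygon), shows that $x^{x_L+\alpha}\zeta^{(m+3)/7}$ restricts to a nonzero section of $\tilde Q$; both sides are then one-dimensional and the inclusion is an isomorphism.

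The main obstacle will be the complementary case $(x_L+\alpha,m+3)\notin\ZZ\times 7\ZZ$. There one must establish $H^0(X,\tilde Q)=0$, equivalently $\tilde L\not\cong\cO_{\tilde C}$; equivalently still, one must show that the kernel of the restriction homomorphism from $\Cl(X)$ (all of whose classes satisfy $D_{x_L',m'}\cdot C=0$) to $\Pic^0(\tilde C)$ is precisely the subgroup generated by the classes of the monomials $x$ and $\zeta$. The inclusion of this subgroup in the kernel is automatic from Case A, while the reverse inclusion is the substantive step. I expect this last step to use the absolute irreducibility of $\xi$, which controls the structure of $\tilde C$ and forces $C$ to have exactly three smooth transverse analytic branches at $e$ over $\overline{\FF}_2$, together with the explicit presentation $R\cong\FF_2[\xi,\zeta,x^{\pm 1}]$ from Theorem~\ref{thm-R}; the delicate point is ruling out any hypothetical extra trivialization of $\tilde L$ that would produce a section of $\tilde Q$ not accounted for by the monomial basis of $R$.
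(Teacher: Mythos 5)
Your first half runs parallel to the paper's argument, just transplanted from $X_\Delta$ to $X$: the paper also reduces the lemma, via Lemma~\ref{lem-tor} and the four-term presentation of $M$, to showing that in each degree the image of $R/\xi R$ (at most the single monomial $x^l\zeta^j$) fills up the kernel of $H^0(\cO(H)|_{\overline{C}}/\tors)\to H^0(\cO_{m\cdot e\cap\overline{C}})$, and in the ``integral'' case it closes exactly as you do, by a degree count giving $h^0\leq 1$. Up to routine compatibilities (the analogue of Lemma~\ref{lem-tor} on $X$, and the agreement of the connecting maps in your long exact sequence with the module action of $\xi$ on $M$ defined through diagram~(\ref{eq-comm})), that portion is fine.

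The genuine gap is the case you yourself flag: when $x_L+\alpha\notin\ZZ$ or $7\nmid m+3$, you must show $H^0(X,\tilde{Q})=0$, and you offer only the expectation that $\ker\bigl(\Cl(X)\to\Pic^0(\tilde{C})\bigr)$ is generated by the classes of $x$ and $\zeta$ --- a statement essentially equivalent to what is to be proved, with no argument given, and one that is unlikely to be provable in the form stated. Two concrete problems: first, ``$\tilde{L}$ has degree zero'' is unjustified, because $D_{x_L+\alpha,m+3}$ is a Weil divisor that fails to be Cartier at the two singular $T$-fixed points lying on $C$, and the degree of the restricted divisorial sheaf modulo torsion can differ from the intersection number $C\cdot D=0$ by local corrections at exactly those points; second, $\tilde{C}$ may well be rational (the Newton polygon of $\xi$ has only three interior lattice points and $C$ has an ordinary triple point at $e$ over $\overline{\FF}_2$), in which case $\Pic^0(\tilde{C})$ is trivial and no statement of the form $\tilde{L}\not\cong\cO_{\tilde{C}}$ can distinguish the degrees you need to kill. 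The paper's proof handles precisely this case by a different mechanism, which is the missing idea: in these degrees the triangle's lower vertices are non-integral, so $H$ is not Cartier at a $T$-fixed point on $\overline{C}$; choosing $n$ with $nH$ Cartier and using the inclusion $\cO_{X_\Delta}(H)^{\otimes n}\subseteq\cO_{X_\Delta}(nH)$, the $n$-th power of any putative section acquires a zero at that fixed point in addition to the $3nm$ zeros along $C\cap E$ forced by vanishing on $m\cdot e\cap\overline{C}$; since the relevant line bundle has degree exactly $3nm$, the section vanishes identically. Without this (or some substitute exploiting the non-Cartier locus on $C$), your argument does not establish the lemma in the crucial degrees.
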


\begin{proof}
Consider the map of exact sequences (\ref{eq-comm}) defined by $s=\xi$. We restrict this map to one degree so that it goes from degree $(x_L-\alpha,m-3)$ to degree $(x_L, m)$, $m\geq 3$. Write the divisors in these degrees as 
\begin{align*} 
\left(x_L-\alpha,m-3\right) &: D' = \pi^* H' - (m-3) E,\\
(x_L, m) &: D = \pi^* H - m E.
\end{align*}
We add the kernels and cokernels of the vertical maps to the diagram. Then, with some abuse of notation, we have
\[
\begin{tikzcd}[column sep=scriptsize]
 & 0 \arrow[d] & 0 \arrow[d] & 0 \arrow[d] & K \arrow[d] & \\ 
0 \arrow[r] & H^0(\cO(D')) \arrow{d}{\xi} \arrow[r] & H^0(\cO(H')) \arrow{d}{\xi} \arrow{r}{\phi} & H^0 (\cO_{(m-3)\cdot e}) \arrow{d}{g_1}\arrow[r] & H^1(\cO(D')) \arrow{d}{g_1} \arrow[r] & 0 \\
0 \arrow[r] & H^0(\cO(D)) \arrow{d} \arrow[r] & H^0(\cO(H)) \arrow{d} \arrow{r}{\phi} & H^0 (\cO_{m\cdot e}) \arrow{d}\arrow[r] & H^1(\cO(D)) \arrow{d}\arrow[r] & 0 \\
 & Q_1 & Q_2 & Q_3 & Q_4 & 
\end{tikzcd}
\]
It is easy to see that the kernels in the left three columns are all zero. In the second column, the map $\xi$ is multiplication in the Laurent polynomial ring. Since this ring is a domain, the kernel of this map, as well as the kernel in the first column, is zero. In the third column, if $g_1 f \equiv 0 \pmod{(x,y)^m}$, then using the fact that $g_1$ lies in $(x,y)^3 \setmin (x,y)^4$, we get that $f \equiv 0 \pmod{(x,y)^{m-3}}$.

The diagram gives us a sequence of maps of the cokernels
\[ 0\to Q_1 \stackrel{\delta_1}{\to} Q_2 \stackrel{\delta_2}\to Q_3 \to Q_4 \to 0. \] 
This sequence is exact except possibly at $Q_2$,  
\[ \ker \delta_2/ \image\delta_1 \isom K.\]
(This can be seen by cutting the two rows into two short exact sequences and then considering the two long exact sequences of kernels and cokernels.) 
Thus, to show that $K=0$, we need to prove that the sequence is also exact at $Q_2$. We will describe the three spaces $Q_1, Q_2, Q_3$.

The space $Q_1$ is the quotient $\FF_2[\xi,\zeta, x^{\pm 1}]/(\xi)$ in degree $(x_L,m)$. This is either zero or $1$-dimensional with basis $x^l \zeta^{m/7}$ when $x_L=l \in \ZZ$ and $7 \mid m$.

To find $Q_2$, by Lemma~\ref{lem-tor} we have an exact sequence of sheaves
\[ 0 \to \cO_{X_\Delta}(H') \stackrel{\xi}{\to} \cO_{X_\Delta}(H) \to \cO_{X_\Delta}(H)|_{\overline{C}}/\torsion \to 0.\]
Here $\overline{C} = V(\xi) \subseteq X_\Delta$, and $C$ is its strict transform in $X$. We assume that $m-3\geq 0$, hence $H'$ is a nef divisor on $X_\Delta$ and the higher cohomology of $\cO_{X_\Delta}(H')$ vanishes. Then the cohomology sequence of the short exact sequence gives us 
\[ Q_2 = H^0(\overline{C}, \cO_{X_\Delta}(H)|_{\overline{C}}/\torsion).\]
The sheaf $\cO_{X_\Delta}(H)$ is locally free in a neighborhood of $\overline{C}$ if and only if the left and right vertices of the triangle corresponding to $H$ are integral. This is equivalent to $x_L= l \in \ZZ$ and $7 \mid m$. (Notice that this is the same condition as the one for $Q_1$ to be nonzero.) 
In this case $\cO_{X_\Delta}(H)$ restricts to a degree $H\cdot \overline{C} = 3m$ line bundle on $\overline{C}$. 

In general, $n H $ is a Cartier divisor in a neighborhood of $C$ for some integer $n>0$. The sheaf $\cO_{X_\Delta}(n H)$ restricts to a degree $n\cdot 3 m$ line bundle on $\overline{C}$. We will use the inclusion 
\[ \cO_{X_\Delta}( H) ^{\tensor n} \subseteq \cO_{X_\Delta}(n H) \]
as follows. Given an element $s \in Q_2$, $s^{n}$ gives a global section of the line bundle $\cO_{X_\Delta}(n H)|_{\overline{C}}$. If $H$ is not Cartier at a $T$-fixed point on $\overline{C}$, then the section $s^{n}$ vanishes at that point.

The space $Q_3$ is the coordinate ring of the affine scheme $(m\cdot e) \cap \overline{C}$.

Now consider $s\in \ker (Q_2\to Q_3)$. This $s$ is a global section of $\cO_{X_\Delta}(H)|_{\overline{C}}/\torsion$ that vanishes on $m\cdot e \cap \overline{C}$. We claim that the vanishing imposes $3m$ conditions on the sections $s\in Q_2$. Since we have trivialized $\cO_{X_\Delta}(H)$ on the torus, this does not depend on the divisor $H$. We may assume that $H=0$ and work with functions in $\cO_{\overline{C}}$. Functions on $\overline{C}$ can be pulled back to functions on $C$. If a function on $\overline{C}$ vanishes on $(m\cdot e) \cap \overline{C}$, then its pullback vanishes on $(m \cdot E)\cap C$. When working over $\overline{\FF}_2$, the curve $C$ intersects $E$ transversely at $3$ points. Hence the pullback of $s$ must vanish at $3m$ points on $C$ when counted with multiplicities. This imposes $3m$ linear conditions on sections $s\in Q_2$. Note also that the map $\delta_2: Q_2\to Q_3$ is a linear map defined over the field $\FF_2$. Its rank does not change if we extend the field to  $\overline{\FF}_2$.

First consider the case where $H$ is Cartier in a neighborhood of $\overline{C}$. Then the pullback of $\cO_{X_\Delta}(H)$ to $C$ is a line bundle of degree $3m$. The pullback of $s$ gives a section of this bundle that vanishes at $3m$ points. There can be at most one such nonzero section, so that $\ker (Q_2\to Q_3)$ has dimension at most $1$. In this case we know that $Q_1$ has dimension $1$. It follows that $\image(Q_1\to Q_2) = \ker(Q_2\to Q_3)$.

If $H$ is not Cartier in a neighborhood of $\overline{C}$, consider the section $s^{n}$ pulled back to $C$ for some $n>0$ such that $nH$ is Cartier. This pullback is a global section of a degree $n\cdot 3 m$ line bundle that vanishes at $n\cdot 3 m$ points in $C\cap E$. Since $H$ is not Cartier, $s^{n}$ also vanishes at a $T$-fixed point on $C$. This implies that $s=0$. In this case also $Q_1=0$, hence $\image(Q_1\to Q_2) = \ker(Q_2\to Q_3) = 0$.
\end{proof}

\begin{lemma}\label{lem-reg-zeta}
The element $\zeta \in R$ is not a zero-divisor on $M/\xi M$.
\end{lemma}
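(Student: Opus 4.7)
The plan is to adapt the diagram chase of Lemma~\ref{lem-reg-xi} one level deeper, applying multiplication by $\zeta$ not to the original four-term exact sequence defining $M$, but to the cokernel four-term exact sequence produced in the proof of Lemma~\ref{lem-reg-xi}. That previous proof yields, in each graded piece of degree $(x_L,m)$ with $m\geq 3$, an exact sequence
\[ 0 \to (R/\xi R)_{(x_L,m)} \to Q_2 \to Q_3 \to (M/\xi M)_{(x_L,m)} \to 0, \]
where $Q_2 = H^0(\overline{C},\cO_{X_\Delta}(H)|_{\overline{C}}/\torsion)$ and $Q_3$ is the coordinate ring of $(m\cdot e)\cap \overline{C}$. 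Apply multiplication by $\zeta$, shifting the source degree by $(0,7)$, and chase the ladder of vertical maps.

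The first step is to show that $\zeta$ acts injectively on each of the three leftmost terms. On $R/\xi R \isom \FF_2[\zeta,x^{\pm 1}]$ (Theorem~\ref{thm-R}) this is trivial. On $Q_2$ it follows because $\cO_{X_\Delta}(H)|_{\overline{C}}/\torsion$ is torsion-free on the irreducible curve $\overline{C}$ and $\zeta$ does not vanish identically on $\overline{C}$ (as $\overline{D}\neq \overline{C}$). On $Q_3$, it reduces to a leading-term analysis in $\FF_2[[x,y]]/(g_1)$: since $g_2\in (x,y)^7\setmin (x,y)^8$ and its leading form is not a multiple of $\overline{g}_1 = x^3+x^2y+y^3$ (which is irreducible, hence the associated graded of the local ring of $\overline{C}$ at $e$ is a graded domain), multiplication by $g_2$ preserves the $(x,y)$-order and is injective on the relevant Artinian quotient. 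Splitting the four-term sequence into two short exact sequences and applying the snake lemma twice, the three injectivities collapse the kernel--cokernel long exact sequence into an injection $\ker(\zeta|_{M/\xi M}) \hookrightarrow \coker(\zeta|_{Q_1})$, and the latter is at most one-dimensional per degree.

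The main obstacle is then to show that this embedding is in fact zero. For this I would use the disjointness $C\cap D = \emptyset$ on $X$, which follows from the intersection computation $C\cdot D = H\cdot H' - 3\cdot 7 = 21-21 = 0$; here $H\cdot H' = 21$ comes from a short Minkowski-sum area calculation (the triangles for $H$ and $H'$ have areas $9/2$ and $49/2$, and their Minkowski sum has area $50$). The disjointness implies that $\overline{C}\cap \overline{D}$ on $X_\Delta$ is concentrated at $e$ with intersection multiplicity exactly $21 = 3\cdot 7$. Pushing this numerical identity through the diagram chase --- identifying $\coker(\zeta|_{Q_2})$ with a length-$21$ skyscraper at $e\in\overline{C}$ coming from the zeros of $\zeta|_{\overline{C}}$, and matching its image inside $\coker(\zeta|_{Q_3})$ against the one-dimensional $\coker(\zeta|_{R/\xi R})$ in integral degrees --- forces the candidate kernel to vanish. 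As in Lemma~\ref{lem-reg-xi}, the argument splits into the integral case ($x_L\in \ZZ$, $7\mid m$, where $Q_1$ is one-dimensional) and the non-integral case (where both $Q_1$ and the kernel must vanish by a pullback-to-$C$ argument using that the line bundle degree on the smooth curve $C$ admits at most one global section with the prescribed vanishing).
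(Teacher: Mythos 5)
Your overall strategy (the ladder of the two four-term sequences from Lemma~\ref{lem-reg-xi}, multiplication by $\zeta$, injectivity in the three left columns) is the paper's, but the central homological step is wrong, and it is exactly where the real content of the lemma lives. From $\ker(\zeta|_{R/\xi R})=\ker(\zeta|_{Q_2})=\ker(\zeta|_{Q_3})=0$, splitting each four-term sequence at $I=\image(Q_2\to Q_3)$ and applying the snake lemma twice does \emph{not} give an injection $\ker(\zeta|_{M/\xi M})\hookrightarrow \coker(\zeta|_{R/\xi R})$. What it gives is $\ker(\zeta|_I)=0$ (since $I\subseteq Q_3$) and then an identification of $\ker(\zeta|_{M/\xi M})$ with the kernel of the induced map $\delta\colon C_2\to C_3$ between the cokernels $C_2=\coker(\zeta|_{Q_2})$ and $C_3=\coker(\zeta|_{Q_3})$ (using that $C_1=\coker(\zeta|_{R/\xi R})=R/(\xi,\zeta)=\FF_2[x^{\pm1}]$ vanishes in every degree with $m>0$). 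Note that if your reduction were correct the lemma would be immediate, since the target you name is zero in all relevant degrees --- a sign the chase has gone astray. Consequently your ``main obstacle'' paragraph, which matches images against ``the one-dimensional $\coker(\zeta|_{R/\xi R})$'', is aimed at the wrong statement; the disjointness $C\cap D=\emptyset$ and the multiplicity $21=3\cdot 7$ are indeed the right raw ingredients, but they are not assembled into a proof that $\delta$ is injective, and $C_2$, $C_3$ are genuinely large spaces ($C_3$ has length $21$). The paper supplies precisely this missing mechanism: restrict the multiplication sequence $0\to\cO_{X_\Delta}(H')\stackrel{\zeta}{\to}\cO_{X_\Delta}(H)\to\cO_{X_\Delta}(H)|_{\overline{D}}/\torsion\to 0$ to $\overline{C}$ (no $\operatorname{Tor}_1$ because $\overline{C}\cap\overline{D}$ is a complete intersection, and the torsion is supported away from $e$), conclude that $C_2$ embeds into $H^0(\cO_{\overline{C}\cap\overline{D}})=C_3$, and observe that this embedding \emph{is} $\delta$. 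Without an argument of this kind your proposal does not establish the lemma.

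Two smaller points. Your injectivity argument in the third column (the associated-graded/order argument in $\FF_2[[x,y]]/(g_1)$) is a legitimate alternative to the paper's, which pulls back to the strict transform $C$ over $\overline{\FF}_2$ and sees $t^7\colon \overline{\FF}_2[t]/(t^{m-7})\to\overline{\FF}_2[t]/(t^m)$ at the three points of $C\cap E$; but as written it hinges on the unproved assertion that the degree-$7$ initial form of $g_2$ is not divisible by $x^3+x^2y+y^3$ --- irreducibility of the latter only makes the associated graded ring a domain, it does not give the non-divisibility. The claim is true (the initial form is $x^7+x^6y+x^4y^3+y^7$, with remainder $y^7$ upon division; equivalently it follows from $C\cap D\cap E=\emptyset$, i.e.\ the tangent cones of $\overline{C}$ and $\overline{D}$ at $e$ share no direction), so this part is repairable. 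Your computation $C\cdot D=H\cdot H'-21=0$ via Minkowski-sum areas, and the resulting disjointness of the two irreducible curves, is correct and consistent with the paper.
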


\begin{proof}
The degree $(x_L,m)$ part of the quotient module $M/\xi M$ was called $Q_4$ in the proof of the previous lemma. In the same lemma we constructed an exact sequence 
\[ 0\to Q_1 \to Q_2\to Q_3\to Q_4 \to 0,\]
and described the other terms $Q_i$ for $i=1,2,3$ explicitly. We will now consider two such exact sequences of $Q_i$ in degrees $(x_L, m-7)$ and $(x_L, m)$. Multiplication with $\zeta$ takes one sequence to the other. On the terms $Q_4$ this gives the action of $\zeta$ on $M/\xi M$. Let us denote the divisors in degrees $(x_L, m-7)$ and $(x_L, m)$
\begin{align*} 
(x_L,m-7) &: B' = \pi^* H' - (m-7) E,\\
(x_L, m) &: B = \pi^* H - m E.
\end{align*}
We will assume that $m\geq 7$, otherwise the source $Q_4$ is zero. The morphism of the two exact sequences with kernels and cokernels added is:
\[
\begin{tikzcd}[column sep=1.55em]
 & 0 \arrow[d] & 0 \arrow[d] & 0 \arrow[d] & L \arrow[d] & \\ 
0 \arrow[r] & (R/\xi)_{B'} \arrow{d}{\zeta} \arrow[r] & H^0(\cO(H')|_{\overline{C}}/\tors) \arrow{d}{\zeta} \arrow{r}{\phi} & H^0 (\cO_{(m-7)\cdot e \cap \overline{C}}) \arrow{d}{\zeta}\arrow[r] & (M/\xi M)_{B'} \arrow{d}{\zeta} \arrow[r] & 0 \\
0 \arrow[r] & (R/\xi)_{B} \arrow{d} \arrow[r] & H^0(\cO(H)|_{\overline{C}}/\tors) \arrow{d} \arrow{r}{\phi} & H^0 (\cO_{m\cdot e \cap \overline{C}}) \arrow{d}\arrow[r] & (M/\xi M)_{B} \arrow{d} \arrow[r] & 0 \\
 & C_1 & C_2 & C_3 & C_4 & 
\end{tikzcd}
\]
Here we have used subscripts $B, B'$ to denote the degree. Again, it is not hard to check that the kernels of the left three vertical maps are zero. In the second column this follows because we are multiplying sections of torsion-free sheaves on an irreducible curve. 

To prove that the third vertical map $\zeta$ is injective, we will change the ground field from $\FF_2$ to $\overline{\FF}_2$, which allows a more geometric argument. Since $\zeta$ is a linear map between $\FF_2$-vector spaces, its injectivity does not change if we extend the field. We work over $\overline{\FF}_2$ and consider regular functions on $\overline{C}$ pulled back to regular functions on $C$. This defines an inclusion
\[ H^0 (\overline{C}, \cO_{m\cdot e \cap \overline{C}}) \subseteq H^0 (C, \cO_{m\cdot E \cap C}).\]
The pullback of $\zeta$ to the blowup defines the divisor $D+7E$, and since $D$ does not intersect $C$, multiplication by the pullback of $\zeta$ is the multiplication by a local equation of $7E$. This multiplication is injective. Locally at each of the $3$ points in $C \cap E$, the map $\zeta$ is isomorphic to
\[ t^7: \overline{\FF}_2[t]/(t^{m-7}) \to \overline{\FF}_2[t]/(t^{m}),\]
where $t$ is the local equation of $E$, defining a local parameter for $C$.

We also note that the cokernel $C_1 = R/(\xi,\zeta) = \FF_2[x^{\pm 1}]$ is zero in degree $B$ because $m>0$. This gives us an exact sequence 
\[ 0\to L \to C_2 \stackrel{\delta}\to C_3 \to C_4 \to 0, \]
and we need to prove that $\delta$ is injective. We do this by describing $C_2$ and $C_3$ geometrically. 

Let $\overline{D}\subseteq X_\Delta$ be the curve defined by $\zeta$, so that $D$ is its strict transform in $X$. The space $C_3$ is the coordinate ring of the affine scheme $(m\cdot e) \cap \overline{C} \cap \overline{D}$. This scheme is equal to $\overline{C} \cap \overline{D}$. Indeed, in the blowup $X$, the scheme defined by the vanishing of $\xi$ and $\zeta$ is 
\[ (C\cup 3E)\cap (D\cup 7E) = (C\cap 7E) \cup (D\cap 3E) \cup 3E \subseteq 7E \subseteq mE.\]

To find the space $C_2$, we start with the exact sequence 
\[ 0 \to \cO_{X_\Delta}(H') \stackrel{\zeta}{\to} \cO_{X_\Delta}(H) \to \cO_{X_\Delta}(H)|_{\overline{D}}/\torsion \to 0.\]
Restriction of these sheaves to $\overline{C}$ gives an exact sequence
\[
  0 \to \cO_{X_\Delta}(H')|_{\overline{C}} \stackrel{\zeta}{\to} \cO_{X_\Delta}(H)|_{\overline{C}} \to \cO_{X_\Delta}(H)|_{\overline{D}\cap {\overline{C}}} \to 0.
\]
There is no $\operatorname{Tor}_1$ term on the left because $\overline{D}\cap {\overline{C}}$ is a complete intersection in $X_\Delta$. There is also no need to mention the torsion on the right because $\overline{D}\cap {\overline{C}}$ is supported at the point $e$ while the torsion was supported at a $T$-fixed point. The left two sheaves may have torsion at the two $T$-fixed points that lie on $\overline{C}$. We can mod out by this torsion and consider the long exact sequence in cohomology:
\[
  0 \to H^0(\overline{C},\cO_{X_\Delta}(H')|_{\overline{C}}/\tors) \stackrel{\zeta}{\to} H^0(\overline{C},\cO_{X_\Delta}(H)|_{\overline{C}}/\tors) \to H^0(\overline{C},\cO_{X_\Delta}(H)|_{\overline{D}\cap {\overline{C}}}) \to \cdots
\]
The cokernel of $\zeta$ in this sequence gives the space $C_2$. Note that 
\[ \coker \zeta \subseteq H^0(\overline{C},\cO_{X_\Delta}(H)|_{\overline{D}\cap {\overline{C}}}) = H^0(\overline{C},\cO_{\overline{D}\cap {\overline{C}}}) = C_3.\]
This inclusion is the map $\delta$ in the sequence above.
\end{proof}

\section{Proof of Theorem~\ref{thm-main}.} \label{sec-4}

We fix one variety $X$. Suppose by contradiction that $X$ over a field of characteristic $0$ contains a negative curve. As explained in Remark~\ref{rem-int}, we may assume that this curve is defined by a polynomial $\tilde{s}$ with integer coefficients such that the reduction $s$ of $\tilde{s}$ mod $2$ gives a possibly reducible negative curve in characteristic $2$. We know that all such $s$ must lie in the ring $R$, hence $\tilde{s}$ is a lift of the element $s\in R$ to characteristic $0$. To prove Theorem~\ref{thm-main}, we show that no element of $R$ in degree $m>0$ can be lifted to characteristic zero.

For any ring S, define $R_S$ and $M_S$ by the exact sequence of graded maps of $S$-modules
\[ 0\to R_S \to \bigoplus_{x_L, m} H^0(X_{\Delta,S}, \cO_{X_{\Delta,S}}(H_{x_L,m})) \stackrel{\phi}{\to} \bigoplus_{x_L, m} H^0 (X_{\Delta,S}, \cO_{m\cdot e, S}) \to M_S \to 0.\]
Here $H^0(X_{\Delta,S}, \cO_{X_{\Delta,S}}(H_{x_L,m}))$ consists of Laurent polynomials with coefficients in $S$ and supported in the triangle corresponding to the degree $(x_L, m)$. In the third term, 
\[ H^0 (X_{\Delta,S}, \cO_{m\cdot e, S}) = S[x,y]/(x,y)^m,\]
and $\phi$ is the change of coordinates map
\[ \phi: s(x,y) \mapsto s(x+1,y+1) \pmod{(x,y)^m}. \]
We are interested in the cases where $S=\ZZ$ or $S=\ZZ/2^p\ZZ$. The case $p=1$ gives the previously defined $R$ and $M$. 

The definition of $R_S$ is functorial in $S$. We say that an element $r\in R = R_{\ZZ/2 \ZZ}$ lifts to characteristic zero if $r$ lies in the image of $R_\ZZ \to R$. More generally, given $r\in R_{\ZZ/2^p \ZZ}$, we say that $r$ lifts mod $2^q$ for $q>p$ if it lies in the image of $ R_{\ZZ/2^q \ZZ} \to R_{\ZZ/2^p \ZZ}$. Clearly, if an element $r\in R$ lifts to characteristic zero then it lifts mod $2^q$ for any $q>1$. 

For an integer $a$, define its $2$-adic order, $\ord(a)$, as the largest $p\geq 0$ such that $2^p$ divides $a$. The element $0$ has order $\infty$. For a polynomial with integer coefficients, we define similarly its order as the largest $p$ such that $2^p$ divides all its coefficients. If the polynomial has order $p$, we call its initial terms the sum of those terms that have coefficients with order exactly $p$.  A polynomial has infinite order if and only if it is the zero polynomial. In that case its initial terms are also zero.

\begin{lemma} \label{lem-lift}
Let $\tilde{s} \in H^0(X_{\Delta,\ZZ}, \cO_{X_{\Delta,\ZZ}}(H_{x_L,m}))$ be such that $\tilde{s}$ lies in the kernel of $\phi \pmod {2^p}$:
\[ \phi(\tilde{s}) = 2^p f,\]
for some $p> 0$, $f\in \ZZ[x,y]/(x,y)^m$, and hence $\tilde{s}$ defines an element $s \in R_{\ZZ/2^p\ZZ}$. Then $s$ can be lifted to mod $2^{p+1}$ if and only if $f$ maps to zero in $M$.
\end{lemma}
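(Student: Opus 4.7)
The plan is to convert the lifting question into a linear-algebra statement about the mod $2$ reduction of $\phi$, at which point the conclusion follows by inspection. First I would unpack what ``$s$ lifts to mod $2^{p+1}$'' means in terms of the exact sequence defining $R_S$. Because $R_S$ is functorial in $S$ (the map $\phi$ is given by the integer substitution $x\mapsto x+1,\, y\mapsto y+1$ followed by truncation mod $(x,y)^m$, independent of $S$, cf.\ Remark~\ref{rem-int}), a lift of $s$ to $R_{\ZZ/2^{p+1}\ZZ}$ is, concretely, a Laurent polynomial $\tilde{s}'\in H^0(X_{\Delta,\ZZ},\cO(H_{x_L,m}))$ supported in the same triangle with
\[
\tilde{s}'\equiv \tilde{s}\pmod{2^p}\quad\text{and}\quad \phi(\tilde{s}')\equiv 0\pmod{2^{p+1}}.
\]
Every such candidate can be written uniquely as $\tilde{s}'=\tilde{s}+2^p\tilde{t}$ where $\tilde{t}$ ranges over integer Laurent polynomials supported in the triangle of degree $(x_L,m)$.

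Next I would apply $\phi$, which is $\ZZ$-linear, to this ansatz. Using the hypothesis $\phi(\tilde{s})=2^p f$, we get
\[
\phi(\tilde{s}') = 2^p f + 2^p \phi(\tilde{t}) = 2^p\bigl(f+\phi(\tilde{t})\bigr),
\]
so the congruence $\phi(\tilde{s}')\equiv 0\pmod{2^{p+1}}$ is equivalent to $f+\phi(\tilde{t})\equiv 0\pmod 2$, i.e.\ to $\bar f \equiv \phi_{\FF_2}(\bar{\tilde{t}}) \pmod 2$ in $\FF_2[x,y]/(x,y)^m$, where bars denote mod $2$ reductions. A lift of $s$ thus exists if and only if the mod $2$ reduction $\bar f$ of $f$ lies in the image of $\phi_{\FF_2}$ on the degree $(x_L,m)$ piece. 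By definition of $M$ as the cokernel of $\phi_{\FF_2}$, this is equivalent to the class of $\bar f$ vanishing in the degree $(x_L,m)$ component of $M$, which is the asserted condition.

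Finally, I would include a brief sanity check that the class $[\bar f]\in M$ does not depend on the choice of integer lift $\tilde{s}$ of $s$: replacing $\tilde{s}$ by $\tilde{s}+2^p u$ for any integer Laurent polynomial $u$ in the triangle changes $f$ to $f+\phi(u)$, and modulo $2$ this changes $\bar f$ by an element of $\image(\phi_{\FF_2})$, so the class in $M$ is unchanged. There is no real obstacle in this argument; the only point requiring care is the bookkeeping that $\tilde{t}$ is genuinely an integer Laurent polynomial supported in the \emph{same} triangle as $\tilde{s}$, so that writing $\tilde{s}'=\tilde{s}+2^p\tilde{t}$ sweeps out exactly the integer lifts of $s$ to the appropriate graded piece of $R_{\ZZ/2^{p+1}\ZZ}$.
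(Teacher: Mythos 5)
Your proposal is correct and follows essentially the same route as the paper: write any candidate lift as $\tilde{s}+2^p\tilde{t}$, apply the $\ZZ$-linear map $\phi$, and observe that the condition $\phi(\tilde{s}+2^p\tilde{t})\equiv 0\pmod{2^{p+1}}$ reduces mod $2$ to $f$ lying in the image of $\phi$, i.e.\ vanishing in the cokernel $M$. The extra check that the class of $f$ in $M$ is independent of the chosen integer representative is a harmless addition not needed for the statement as phrased.
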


\begin{proof} 
To lift $s$ means to find $\tilde{s}'\in H^0(X_{\Delta,\ZZ}, \cO_{X_{\Delta,\ZZ}}(H_{x_L,m}))$ such that 
\[ \phi (\tilde{s}+ 2^{p} \tilde{s}' ) = 2^p (f + \phi(\tilde{s}')) \equiv 0 \pmod {2^{p+1}}.\]
This congruence holds if and only if 
\[ f \equiv -\phi(\tilde{s}') \pmod 2,\]
which means $f=0$ in $M$.
\end{proof}

We start by considering liftings of the generators $\xi$ and $\zeta$ of $R$. For the rest of this section let us fix representatives $\tilde{\xi}, \tilde{\zeta} \in \ZZ[x,y]$, and write these as 
\begin{align*}
\tilde{\xi}(x+1,y+1) = 2 f_1 +g_1,\\
\tilde{\zeta}(x+1, y+1) = 2 f_2 +g_2,
\end{align*}
where $2f_1 = \phi(\tilde{\xi})$, $2f_2 = \phi(\tilde{\zeta})$, $g_1\in (x,y)^3$, $g_2\in (x,y)^7$. Here we view $f_1, f_2$ as elements of the same degree as $\xi, \zeta$, either in $\ZZ[x,y]/(x,y)^m$, where $m=3,7$, or in the quotient $M$.  Multiplication with $g_1, g_2$ gives the action of $\xi,\zeta$.

From the previous lemma we get that $\xi$ (resp. $\zeta$) can be lifted mod $4$ if and only if the image of $f_1$ (resp. $f_2$) is zero in $M$. 

Let us also consider liftings of $\xi^2$ (the case of $\zeta^2$ is similar). We use the same integral representation $\tilde{\xi}$. Then
\[ \tilde{\xi}^2(x+1,y+1) = (2 f_1 +g_1)^2 = 4(f_1^2 + f_1 g_1) + g_1^2. \]
Here $g_1^2 \in (x,y)^6$, hence the first term tells us that $\xi^2$ automatically lifts mod $4$. The image of $f_1^2 + f_1 g_1$ in $M$ determines if $\tilde{\xi}^2 \pmod 4$ can be lifted mod $8$:
 \[ \phi( \tilde{\xi}^2 + 4 \tilde{s}) \equiv 0 \pmod 8.\]
One can ask if, more generally, we can lift $\xi^2$ mod $8$ by adding a multiple of $2$ to $\tilde{\xi}^2$:
\[ \phi( \tilde{\xi}^2 + 2 \tilde{s}' + 4 \tilde{s}) \equiv 0 \pmod 8.\]
However, notice that $\phi(\tilde{s}') \equiv 0 \pmod 2$, and since $\xi^2$ is the only nonzero element of $R$ in its degree, we must have $\tilde{s}'\equiv c \tilde{\xi}^2 \pmod 2$ for some $c\in \FF_2$. We saw that $\phi(\tilde{\xi}^2)\equiv 0 \pmod 4$, hence $\phi(2 \tilde{s}') \equiv 0 \pmod 8$. Therefore, adding the term $2 \tilde{s}'$ does not change the liftability of $\xi^2$ mod $8$. In summary, $\xi^2$ can be lifted mod $8$ if and only if $f_1^2+g_1 f_1$ in zero in $M$.

\begin{lemma} \label{lem-xi-zeta}
The elements $f_1, f_2, f_1^2 + f_1 g_1,f_2^2 + f_2 g_2$ are nonzero in $M$. In other words, $\xi,\zeta$ can not be lifted mod $4$, and $\xi^2, \zeta^2$ can not be lifted mod $8$.
\end{lemma}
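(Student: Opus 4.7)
By Lemma~\ref{lem-lift}, each of the four non-liftability assertions is equivalent to the statement that the corresponding element is nonzero in $M$. The plan is to use the $R$-module structure on $M$ from diagram~(\ref{eq-comm}) to simplify the two quadratic statements, then reduce everything to a finite-dimensional linear algebra check in four specific graded pieces.

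Since $\xi$ acts on $M$ by multiplication by $g_1$ and $\zeta$ by multiplication by $g_2$, we have $f_1 g_1 = \xi \cdot f_1$ and $f_2 g_2 = \zeta \cdot f_2$ in $M$, hence
\[ f_1^2 + f_1 g_1 \equiv f_1^2 \pmod{\xi M}, \qquad f_2^2 + f_2 g_2 \equiv f_2^2 \pmod{\zeta M}. \]
So it is enough to verify four facts: $f_1 \neq 0$ in $M$, $f_2 \neq 0$ in $M$, $f_1^2 \neq 0$ in $M/\xi M$, and $f_2^2 \neq 0$ in $M/\zeta M$. Each of these is about a single homogeneous element, lying respectively in the pieces indexed by $(\alpha,3)$, $(0,7)$, $(2\alpha,6)$, and $(0,14)$.

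Next I would invoke Theorem~\ref{thm-free}: because $\xi,\zeta$ form a regular sequence on $M$, the quotient $M/\xi M$ is free over $\FF_2[\zeta,x^{\pm 1}]$ and $M/\zeta M$ is free over $\FF_2[\xi, x^{\pm 1}]$. In particular, the degree pieces of these quotients are precisely the cokernels $Q_4$ and $C_4$ appearing in the proofs of Lemmas~\ref{lem-reg-xi} and \ref{lem-reg-zeta}, and each can be computed as the cokernel of an explicit $\FF_2$-linear map obtained by truncating $\phi$ to the relevant triangle.

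The remaining work is a direct linear algebra calculation in the four specified degrees: write the relevant $\phi$ (or its quotient analogue) as a matrix in a monomial basis, compute the image, and check that the class of the element in question is nonzero. Here $f_1 \equiv 1+y$ and therefore $f_1^2 \equiv 1 + y^2$ are immediate from the formula for $\tilde\xi$; in characteristic $2$, squaring just doubles exponents by Frobenius. The element $f_2 \pmod 2$ is obtained by expanding $\tilde{\zeta}(x+1,y+1)$ using $\tilde{\zeta} = x(y-1)\tilde{\xi}^2 + (xy^2-1)^7$, extracting the terms of degree at most $6$, and dividing by $2$; its Frobenius then yields $f_2^2$. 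I expect the main obstacle to be this last bookkeeping step for $f_2$ and the resulting matrix computation in degrees $(0,7)$ and $(0,14)$, which is precisely the content isolated in Section~\ref{sec-5}. Everything upstream of these matrix computations is formal and triangle-independent.
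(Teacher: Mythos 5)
Your plan reproduces the framework that the paper has already put in place before the lemma (the translation, via Lemma~\ref{lem-lift} and the discussion of $\xi^2$ in Section~\ref{sec-4}, of the four statements into nonvanishing of explicit classes in finitely many graded pieces), but it stops short of the one thing the lemma actually requires: the verification itself. Worse, your intermediate reduction weakens your position. From $f_1g_1=\xi\cdot f_1$ you pass to the claim that it suffices to show $f_1^2\neq 0$ in $M/\xi M$ (and $f_2^2\neq 0$ in $M/\zeta M$). That implication is only one-way: if $f_1^2+\xi f_1=0$ in $M$ then indeed $f_1^2\in\xi M$, but $f_1^2$ could lie in $\xi M$ (say $f_1^2=\xi m$ with $m\neq f_1$) while $f_1^2+\xi f_1$ is still nonzero. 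So you have replaced the statement you need by a strictly stronger one that nothing established in the paper guarantees; if it fails, your computation is inconclusive even though the lemma is true. The detour is also unnecessary: $f_1^2+g_1f_1\neq 0$ in $M$ is itself a finite check in the degree of $\xi^2$ (is the mod-$2$ truncation of $f_1^2+g_1f_1$ in the image of $\phi$?), and this is exactly what the paper verifies, phrased as: the unique even elementary divisor of the integer matrix of $\phi$ in that degree equals $4$ and is not divisible by $8$. Incidentally, the space $C_4$ you invoke is a graded piece of $M/(\xi,\zeta)M$, not of $M/\zeta M$, so the quotient you would need is not the one computed in Lemma~\ref{lem-reg-zeta}.

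The second missing idea is uniformity in $\alpha$. The lemma has to hold for every triangle of Theorem~\ref{thm-main}, i.e.\ for all rational $\alpha\in[-4/15,-16/105]$; the lattice points of the relevant triangles, and hence your matrices, vary with $\alpha$, so ``four graded pieces'' is really infinitely many computations. The paper reduces this to finitely many by (a) observing that it suffices to treat $\xi^2$ and $\zeta^2$ (if $\xi$ lifted mod $4$, then $\xi^2$ would lift mod $8$), and (b) using the monotonicity of the statement under enlarging the support polygon: subdividing the $\alpha$-interval and replacing the triangles by two larger triangles (for $\xi^2$) and four trapezoids (for $\zeta^2$), then running the elementary-divisor computation once per polygon. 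Your proposal, as written, would have to be rerun for each individual $\alpha$ and therefore cannot by itself establish the lemma for all $X$ in the theorem; adding the enlargement/subdivision step and then actually performing the matrix computations (as in Section~\ref{sec-5}) is what closes the argument.
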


We will prove Lemma~\ref{lem-xi-zeta} in the next section. The proof of this lemma is the only part in the proof of Theorem~\ref{thm-main} that depends on the triangle $\Delta$. All other proofs work the same way for all triangles.

\begin{theorem}
Let $s\in R$ be a nonzero homogeneous element of degree $(x_L, m>0)$ that lifts to characteristic zero. Then, there is a nontrivial linear relation:
\[ r_1 x_1 + r_2 x_2 = 0, \quad r_1, r_2 \in R,\]
between elements $x_1, x_2 \in M$, where 
\[ x_1 \in \{ f_1, f_1^2+g_1f_1\}, \quad x_2 \in \{ f_2, f_2^2+g_2f_2\}.\]
\end{theorem}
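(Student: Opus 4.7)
The plan is to expand the equation $\phi(\tilde s)=0$ as a power series in $2$ via the substitutions $\tilde\xi(x+1,y+1)=g_1+2f_1$ and $\tilde\zeta(x+1,y+1)=g_2+2f_2$, and read off obstructions at successive powers of $2$ until a relation of the prescribed form surfaces. Using Theorem~\ref{thm-R} I write $s=P(x,\xi,\zeta)$ for a unique homogeneous $P\in\FF_2[y_1,y_2,x^{\pm 1}]$, take the natural $0/1$-coefficient integer lift $\tilde P$, and decompose any characteristic-zero lift as $\tilde s=\tilde P(x,\tilde\xi,\tilde\zeta)+2\tilde u$. The binomial theorem yields
\[ \phi(\tilde P(x,\tilde\xi,\tilde\zeta))=\sum_{p,q\geq 0}2^{p+q}f_1^p f_2^q\, G_{pq}, \qquad G_{pq}=\sum_{i,j,l} c_{ijl}\tbinom{i}{p}\tbinom{j}{q}(x+1)^l g_1^{i-p}g_2^{j-q}, \]
and a key preliminary observation is that homogeneity forces every monomial of $\tilde P(x+1,g_1,g_2)$ to have Taylor order $\geq 3i+7j=m$, so $G_{00}\equiv 0$ in $\ZZ[x,y]/(x,y)^m$.

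First I extract the mod-$4$ obstruction: dividing the identity $\phi(\tilde s)=\phi(\tilde P(x,\tilde\xi,\tilde\zeta))+2\phi(\tilde u)=0$ by $2$ and reducing modulo $2$ yields in $M$ the relation
\[ \bar P_a\cdot f_1+\bar P_b\cdot f_2=0, \]
where $\bar P_a,\bar P_b\in R$ are the formal partial derivatives of $P$ modulo $2$, acting on $f_1,f_2$ through the $R$-module structure on $M$ via the diagram~(\ref{eq-comm}). If $\bar P_a\neq 0$ or $\bar P_b\neq 0$ in $R$, this is already the desired nontrivial two-term relation with $(x_1,x_2)=(f_1,f_2)$ and $(r_1,r_2)=(\bar P_a,\bar P_b)$.

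Otherwise $P$ involves only even powers of $y_1,y_2$, making $G_{10},G_{01}$ divisible by $2$. Writing $G_{10}=2G_{10}'$ and $G_{01}=2G_{01}'$, a direct calculation using $\binom{i}{2}\equiv i/2\pmod 2$ for even $i$ yields the key identities $\bar G_{10}'=\xi\cdot\bar G_{20}$ and $\bar G_{01}'=\zeta\cdot\bar G_{02}$ in $R$. Writing $\tilde u=\tilde P_u(x,\tilde\xi,\tilde\zeta)+2\tilde u_1$ with $u=P_u\in R$ in the same degree as $s$, dividing the full identity by $4$ and reducing modulo $2$, then substituting $f_i^2=z_i+\xi_i f_i$ (with $\xi_1=\xi$, $\xi_2=\zeta$ and $z_i:=f_i^2+g_if_i$) and noting $\bar G_{11}\equiv 0$ (since $ij$ is even when both $i,j$ are), the characteristic-$2$ cancellations collapse the expansion into the four-term relation
\[ \bar P_{u,a}\cdot f_1+\bar P_{u,b}\cdot f_2+\bar G_{20}\cdot z_1+\bar G_{02}\cdot z_2=0 \quad\text{in } M. \]
From here I extract a nontrivial two-term relation of the prescribed form. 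When $\bar P_{u,a}=\bar P_{u,b}=0$ in $R$ (forced, for instance, whenever every $(i,j)$ with $3i+7j=m$ has both $i,j$ even, so $P_u$ itself has only even exponents), the relation reduces to $\bar G_{20}z_1+\bar G_{02}z_2=0$ with $(x_1,x_2)=(z_1,z_2)$; if instead $\bar G_{20}=\bar G_{02}=0$, then $P$ has all exponents divisible by $4$, the mod-$8$ obstruction is vacuous, and I iterate the analysis at higher powers of $2$, ultimately producing a relation of the form $\xi^a z_1=0$ or $\zeta^b z_2=0$. The iteration terminates because $s\neq 0$ has finite exponents in $\xi$ and $\zeta$.

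The main obstacle will be the bookkeeping in the second case: rigorously establishing the identities $\bar G_{10}'=\xi\bar G_{20}$ and $\bar G_{01}'=\zeta\bar G_{02}$, executing the characteristic-$2$ cancellations that reduce the five-term expansion to the four-term form, and justifying that the iterative descent always terminates in one of the four admissible pairs $(x_1,x_2)\in\{f_1,z_1\}\times\{f_2,z_2\}$.
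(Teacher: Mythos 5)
Your overall strategy---expanding $\phi(\tilde s)$ $2$-adically through $\tilde\xi(x+1,y+1)=g_1+2f_1$, $\tilde\zeta(x+1,y+1)=g_2+2f_2$ and reading off obstructions level by level---is the same idea as the paper's, just organized through successive remainders $\tilde u,\tilde u_1,\dots$ rather than the paper's one-shot splitting. But there is a genuine gap at the mod-$8$ stage, and it recurs at every later stage of your iteration. Your four-term relation $\bar P_{u,a}f_1+\bar P_{u,b}f_2+\bar G_{20}z_1+\bar G_{02}z_2=0$ is only of the prescribed shape if the $f$-terms and the $z$-terms do not occur simultaneously, and your two cases ($\bar P_{u,a}=\bar P_{u,b}=0$, or $\bar G_{20}=\bar G_{02}=0$) do not exhaust the possibilities. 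The sufficient condition you offer for the first case---``every $(i,j)$ with $3i+7j=m$ has both $i,j$ even''---is both too weak and essentially never satisfied: it ignores the $x_L$-component of the grading, and solutions of $3i+7j=m$ differ by $(7,-3)$, so consecutive solutions have opposite parities in both coordinates. The fact you actually need, which the paper proves and uses crucially, is that all monomials $x^l\xi^i\zeta^j$ in a fixed \emph{bidegree} $(x_L,m)$ have the same parity of $i$ and the same parity of $j$, because the Laurent monomials of bidegree $(0,0)$ are powers of $x^3\xi^{14}\zeta^{-6}$. Since $P_u$ lives in the same bidegree as $P$, this forces $\bar P_{u,a}=\bar P_{u,b}=0$ whenever $P$ has only even exponents, eliminating the mixed case; without it your argument stalls exactly there, and the same mixing problem reappears for the deeper remainders in your iteration.

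Beyond that, the terminal branch ($i,j\equiv 0 \bmod 4$, etc.) is only sketched: you would have to show at each level that the new remainder again lies in $R$, that the higher terms $f_1^3G_{30}$, $f_1f_2G_{11}$, \dots do not contribute at the relevant $2$-adic order, and that the remainder's contribution cannot mix $f_1,f_2$ into a level where the main term yields $z_1,z_2$. The paper avoids this bookkeeping by a different organization: it computes, for each monomial $x^l\tilde\xi^i\tilde\zeta^j$, the exact order and initial terms of $\phi$ (which depend on $\operatorname{ord}(i),\operatorname{ord}(j)$ and always land on one of the admissible pairs), decomposes the lift as $\tilde s=\tilde s_1+\tilde s_2$ with $\tilde s_2$ of \emph{maximal} order (existence needs a $2$-adic closedness argument, also missing from your sketch), and then uses linear independence over $\FF_2$ of the coefficient vectors $(r_1,r_2)\in R^2$ attached to distinct monomials to see that the initial terms form a nontrivial two-term combination. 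If you import the bidegree parity lemma and carry out the order/initial-term analysis uniformly in $\operatorname{ord}(i),\operatorname{ord}(j)$, your approach can be completed, but as written the case analysis is incomplete.
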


\begin{proof} 
Let 
\[ s = \sum_{l,i,j} {a}_{l,i,j} x^l {\xi}^i {\zeta}^j, \quad {a}_{l,i,j} \in \FF_2,\]
and let  $\tilde{s}$ be a lifting of $s$ to characteristic zero; that means, an integral polynomial in $H^0(X_{\Delta,\ZZ}, \cO_{X_{\Delta,\ZZ}}(H_{x_L,m}))$ that lies in the kernel of $\phi$ and reduces to $s$ mod $2$. We lift the coefficients ${a}_{l,i,j}$ of $s$ to $\tilde{a}_{l,i,j} \in \ZZ$ and write 
\[ \tilde{s}_1 = \sum_{l,i,j} \tilde{a}_{l,i,j} x^l \tilde{\xi}^i \tilde{\zeta}^j, \quad \tilde{a}_{l,i,j} \in \ZZ.\]
The liftings $\tilde{a}_{l,i,j} $ are chosen so that $\tilde{s}_1$, written as a Laurent polynomial in $x,y$, is as close to $\tilde{s}$ as possible, namely the difference 
\[ \tilde{s}_2 = \tilde{s} - \tilde{s}_1\]
has maximal order. We first check that such an $\tilde{s}_2$ with maximal order exists: if there is a sequence of $\tilde{s}_2$ with increasing orders then we may also take $\tilde{s}_2=0$ with infinite order.  Let $S_1$ be the free $\ZZ$-module generated by the finite set of monomials $x^l \tilde{\xi}^i \tilde{\zeta}^j$ in degree $(x_L, m>0)$, and let
\[ \iota: S_1 \to \ZZ[x^{\pm 1}, y^{\pm 1}]\]
be the $\ZZ$-linear map that writes a polynomial in $x,\tilde{\xi},\tilde{\zeta}$ as a Laurent polynomial in $x,y$. Then $\iota$ is injective. We showed this in the proof of Theorem~\ref{thm-R} over $\FF_2$, and this implies it over $\ZZ$. Now suppose we have a sequence of $\tilde{s}_1\in S_1$ such that the sequence $\iota(\tilde{s}_1)$ approaches $\tilde{s}$ in the $2$-adic topology (the differences $\tilde{s}_2 = \tilde{s} - \iota(\tilde{s}_1)$ have orders approaching infinity). Since the subspace $\iota(S_1) \subseteq \ZZ[x^{\pm 1}, y^{\pm 1}] $ is closed in the $2$-adic topology, the sequence $\tilde{s}_1$ also approaches a limit $\tilde{s}_{1,\infty} \in S_1$. Now if we take $\tilde{s}_1=  \tilde{s}_{1,\infty}$, then $\iota(\tilde{s}_1) = \tilde{s}$ and $\tilde{s}_2 = 0$. 

The orders satisfy:
\begin{enumerate}
\item $\ord(\tilde{s}_2) >0$ because $\tilde{s}_1 \equiv \tilde{s} \pmod{2}$. 
\item $\ord(\phi(\tilde{s}_1)) = \ord(\phi(\tilde{s}_2))$ because $\phi(\tilde{s})$ has infinite order, so the initial terms of $\phi(\tilde{s}_1)$ and $\phi(\tilde{s}_2)$ must cancel.
\item $ord (\phi (\tilde{s}_2)) \geq ord (\tilde{s}_2)$. Change of coordinates does not change the order, but when applying $\phi$ we also remove the higher degree terms.
\item $ord (\tilde{s}_2) \geq \ord(\phi(\tilde{s}_2))$. This is clear if $ord(\tilde{s}_2) = \infty$. Otherwise, let $p=ord (\tilde{s}_2)$, $\tilde{s}_2 = 2^p \hat{s}_2$ for some $\hat{s}_2$. If $p< \ord(\phi(\tilde{s}_2))$ then $\phi(\hat{s}_2) \equiv 0 \pmod 2$. Hence $\hat{s}_2 \pmod 2$ is a nonzero element of $R$. Since $\hat{s}_2 \pmod 2$ is a polynomial in $x^{\pm 1}, \xi, \zeta$, we can move such terms from $\tilde{s}_2$ to $\tilde{s}_1$ and increase the order of $\tilde{s}_2$.
\end{enumerate}
These conditions imply 
\[ \ord(\phi(\tilde{s}_1)) = \ord(\phi(\tilde{s}_2)) = \ord(\tilde{s}_2)>0.\]
We now consider the initial terms in the expression $\phi(\tilde{s}) = \phi(\tilde{s}_1) + \phi(\tilde{s}_2)$. Since the left hand side is zero, the initial terms of $\phi(\tilde{s}_1)$ and $\phi(\tilde{s}_2)$ must sum to zero. We claim that $\phi(\tilde{s}_1)$ has finite order $p$ and 
\[ \phi(\tilde{s}_1) \equiv 2^p (r_1 x_1 + r_2 x_2) \pmod{2^{p+1}},\]
where the linear combination is as in the statement of the theorem. (For simplicity, here and below we are abusing notation slightly. The sum in the parentheses is an element of $\ZZ[x,y]/(x,y)^m$ that mod $2$ reduces to the claimed expression.) 
Assuming this, $\tilde{s}_2$ also has the same finite order $p$, and we can write $\tilde{s}_2 = 2^p \hat{s}_2$. The theorem now follows from the congruence
\[ 0= \frac{1}{2^p}(\phi(\tilde{s}_1) + \phi(\tilde{s}_2)) \equiv   r_1 x_1 + r_2 x_2 + \phi(\hat{s}_2) \pmod 2.\]

We prove the claim about the initial terms of $\phi(\tilde{s}_1)$ by the following steps:
\begin{enumerate}
\item For every monomial $\tilde{r}= x^l \tilde{\xi}^i \tilde{\zeta}^j$ that can appear in $\tilde{s}_1$, we show that $\phi(\tilde{r})$ has a finite order $q$ that depends on $l,i,j$. Moreover,
 \[\phi(\tilde{r}) \equiv 2^q (r_1 x_1 + r_2 x_2) \pmod {2^{q+1}},\]
 where the linear combination mod $2$ is as in the statement of the theorem. 
 \item For different monomials $\tilde{r}$, the expression $r_1 x_1 + r_2 x_2$ has different $r_1, r_2$ but the same $x_1, x_2$. A linear combination of such expressions is then an expression of the same type.
\item The coefficients $(r_1, r_2) \in R^2$ that appear for different monomials $\tilde{r}$ are linearly independent over $\FF_2$. This implies that a nontrivial linear combination of the expressions $r_1 x_1 + r_2 x_2$ for different monomials $\tilde{r}$ is a nontrivial linear combination of $x_1$ and $x_2$. The initial terms of  $\phi(\tilde{s}_1)$ are then $2^p$ times one such nontrivial linear combination, where $p>0$ is finite. 
\end{enumerate}

Consider one monomial $x^l \tilde{\xi}^i \tilde{\zeta}^j$. We will find the initial terms of $\phi(x^l \tilde{\xi}^i \tilde{\zeta}^j) = \phi(x)^l \phi(\tilde{\xi})^i \phi(\tilde{\zeta})^j$. 
 The order of $\phi(x)^l = (x+1)^l$ is $0$. Its initial terms mod $2$ are $(x+1)^l \pmod 2$. Next consider $\phi(\tilde{\xi})^i$:
\[ \phi(\tilde{\xi})^i = (g_1+2f_1)^i = g_1^i + 2 i g_1^{i-1} f_1 + 4 { i\choose 2} g_1^{i-2} f_1^2 + \ldots.\]
When $i$ is odd, we get 
\begin{align*} \phi(\tilde{\xi})^i \equiv g_1^i + 2 i g_1^{i-1} f_1 \pmod 4 \\
\equiv g_1^i + 2 g_1^{i-1} f_1 \pmod 4, 
\end{align*}
and when $i$ is even, we get
\begin{align*} \phi(\tilde{\xi})^i &\equiv g_1^i + 2 i g_1^{i-1} f_1 + 4 { i\choose 2} g_1^{i-2} f_1^2 \pmod {2^{\ord(i)+2}} \\
 &\equiv g_1^i + 2^{\ord(i)+1} g_1^{i-2} ( g_1 f_1 + f_1^2) \pmod {2^{\ord(i)+2}}.
 \end{align*}
 
There are similarly two cases for $\phi(\tilde{\zeta})^j$ depending on whether $j$ is odd or even. When computing the initial terms of $\phi(x^l \tilde{\xi}^i \tilde{\zeta}^j)$, we get four cases depending on the parity of $i$ and $j$. In the first case, where $i$ and $j$ are both odd, 
 \[ \phi(x^l \tilde{\xi}^i \tilde{\zeta}^j) \equiv (x+1)^l (g_1^i + 2 g_1^{i-1} f_1) ( g_2^j + 2 g_2^{j-1} f_2) \pmod 4.\]
 Since $g_1^i g_2^j \equiv 0 \pmod{(x,y)^{m}}$, we get the initial terms
 \[ \phi(x^l \tilde{\xi}^i \tilde{\zeta}^j) \equiv 2 (x+1)^l [g_1^{i-1} g_2^j f_1 + g_1^i g_2^{j-1} f_2 ] \pmod 4.\]
 Here we view $(x+1)^l [g_1^{i-1} g_2^j f_1 + g_1^i g_2^{j-1} f_2 ]$ as a linear combination of elements $x_1=f_1$ and $x_2=f_2$ with coefficients $r_1= (x+1)^l g_1^{i-1} g_2^j$ and $ r_2= (x+1)^l g_1^{i}g_2^{j-1}$. The polynomials $r_1, r_2$ correspond to elements $r_1 = x \xi^{i-1} \zeta^j$ and $ r_2= x^l \xi^{i}\zeta^{j-1}$ in $R$.
 
In the second case, when $i$ is even and $j$ is odd, then 
\begin{align*}
\phi(x^l \tilde{\xi}^i \tilde{\zeta}^j) &\equiv (x+1)^l (g_1^i + 2^{\ord(i)+1} g_1^{i-2} ( g_1 f_1 + f_1^2))( g_2^j + 2 g_2^{j-1} f_2) \pmod 4 \\
&\equiv 2 (x+1)^l g_1^i g_2^{j-1} f_2 \pmod 4.
\end{align*}
Here we have a linear combination with only one term $r_2 x_2$, where $r_2= x^l \xi^i \zeta^{j-1}$ and $x_2=f_2$. The case $i$ odd, $j$ even, gives similarly a one term linear combination.

Finally consider the case where both $i$ and $j$ are even. If the orders of $i$ and $j$ are not equal, then we get a linear combination with one term. Let us do the more complex case when $p=\ord(i)=\ord(j)$. Then 
\begin{align*}
\phi(x^l \tilde{\xi}^i \tilde{\zeta}^j) &\equiv (x+1)^l (g_1^i + 2^{p+1} g_1^{i-2} ( g_1 f_1 + f_1^2))( g_2^j + 2^{p+1} g_2^{j-2} ( g_2 f_2 + f_2^2) ) \pmod {2^{p+2}} \\
&\equiv 2^{p+1} (x+1)^l [ g_1^{i-2} g_2^j (g_1 f_1 + f_1^2) + g_1^{i} g_2^{j-2} (g_2 f_2 + f_2^2) \pmod {2^{p+2}}. 
\end{align*}
Here we have a linear combination of $x_1=g_1 f_1 + f_1^2$ and $x_2=g_2 f_2 + f_2^2$ with coefficients $r_1 = x^l \xi^{i-2} \zeta^j$ and $r_2= x^l \xi^{i} \zeta^{j-2}$.  (When the orders of $i$ and $j$ are not equal, we get one half of this linear combination, $r_1 x_1$ or $r_2 x_2$.)

Notice that all monomials $x^l \tilde{\xi}^i \tilde{\zeta}^j$ in the same degree $(x_L,m)$ must have the same parity of $i$ and the same parity of $j$. This can be seen by finding all Laurent monomials of degree $(0,0)$. These monomials are all powers of $x^3 \xi^{14} \zeta^{-6}$. The fact that $14$ and $6$ are even then proves the claim. This implies that the expressions $r_1 x_1+r_2 x_2$ that we get for each monomial $x^l \tilde{\xi}^i \tilde{\zeta}^j$ involve the same $x_1$ and $x_2$. Hence a linear combination of these expressions is an expression of the same type.

Finally, let us prove that the coefficients $(r_1, r_2) \in R^2$ that appear in the expression $r_1 x_1+r_2 x_2$ for different monomials $x^l \tilde{\xi}^i \tilde{\zeta}^j$ are linearly independent over $\FF_2$. This is easy to see from the formulas above. For example, suppose the degree $(x_L, m)$ is such that both $i$ and $j$ are even. Then the coefficients $(r_1, r_2)$ for a monomial $x^l \tilde{\xi}^i \tilde{\zeta}^j$ are either $(x^l \xi^{i-2} \zeta^j, x^l \xi^{i} \zeta^{j-2})$ or $(x^l \xi^{i-2} \zeta^j, 0)$ or $(0, x^l \xi^{i} \zeta^{j-2})$. These three cases correspond to $ord(i)=ord(j)$, $ord(i)<ord(j)$, and $ord(i)>ord(j)$. Since the monomials $x^a \xi^b \zeta^c$ are linearly independent in $R$, these vectors in $R^2$ as $l,i,j$ vary are clearly linearly independent.
 \end{proof}

Recall that $R$ and $M$ are graded by $(x_L,m)$. In the next theorem we only need the second component $m$ of the degree. Let us simply call this the degree.

\begin{theorem}
Let $x_1, x_2 \in M$ be nonzero homogeneous elements, $x_1$ in degree $m=3$ or $m=6$, and $x_2$ in degree $m=7$ or $m=14$.
Then there is no nontrivial relation 
\[ r_1 x_1 + r_2 x_2 = 0\]
in $M$ with coefficients $r_1, r_2 \in R$.
\end{theorem}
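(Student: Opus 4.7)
The plan is to combine the freeness of $M$ as an $R$-module (Theorem~\ref{thm-free}) with a numerical check: the $m$-degrees of nonzero $m$-homogeneous elements of $R = \FF_2[\xi, \zeta, x^{\pm 1}]$ lie in the numerical semigroup $S = \{3i + 7j : i, j \geq 0\} = \ZZ_{\geq 0} \setminus \{1, 2, 4, 5, 8, 11\}$, and the gap set $\{1, 2, 4, 5, 8, 11\}$ will obstruct any putative relation.

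First I would reduce to the case where $r_1, r_2$ are $m$-homogeneous. Decomposing the given relation by the $m$-grading, the piece of $m$-degree $n$ reads $(r_1)_{n - m_1} x_1 + (r_2)_{n - m_2} x_2 = 0$; a nontrivial relation forces some $n$ where both coefficients are nonzero, since $M$ is torsion-free as a free module. After replacing $(r_1, r_2)$ by this pair, both are $m$-homogeneous and nonzero.

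Second, using that $R$ is a UFD and $M$ is a free $R$-module with a homogeneous basis $\{e_\alpha\}$, I would divide by $d = \gcd(r_1, r_2)$ to obtain coprime $m$-homogeneous $r_1', r_2'$ with $r_1' x_1 + r_2' x_2 = 0$. Writing $x_i = \sum_\alpha a^{(i)}_\alpha e_\alpha$ and reading off components, the identities $r_1' a^{(1)}_\alpha = -r_2' a^{(2)}_\alpha$ together with $\gcd(r_1', r_2') = 1$ force $a^{(2)}_\alpha = r_1' c_\alpha$ and $a^{(1)}_\alpha = -r_2' c_\alpha$, so that $y = \sum_\alpha c_\alpha e_\alpha$ is a nonzero $m$-homogeneous element of $M$ with $x_1 = -r_2' y$ and $x_2 = r_1' y$.

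Third, let $m_0 = \deg_m y \geq 0$. The $m$-degrees of $r_2', r_1'$ are $m_1 - m_0, m_2 - m_0 \in S$. Checking $(m_1, m_2) \in \{3, 6\} \times \{7, 14\}$: the admissible $m_0 \in \{0, 3\}$ (for $m_1 = 3$) or $\{0, 3, 6\}$ (for $m_1 = 6$) produce $m_2 - m_0$ equal to an element of the set $\{7, 14, 4, 11, 1, 8\}$, and only the values $7, 14$ belong to $S$, forcing $m_0 = 0$ in every case. Finally, $M_{x_L, 0} = 0$ for every $x_L$: the defining exact sequence of $M$ in Section~\ref{sec-3} has target $H^0(X_\Delta, \cO_{0 \cdot e}) = \FF_2[x,y]/(x,y)^0 = 0$ in $m$-degree $0$. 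Hence $y = 0$, contradicting $x_1 \neq 0$. The only delicate step is the initial reduction to $m$-homogeneous coefficients; the rest is the standard UFD-based common-divisor extraction together with a finite numerical inspection.
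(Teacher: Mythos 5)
Your proposal is correct, and it reaches the conclusion by a mildly different mechanism than the paper, although the numerical heart of the argument is the same. The paper also starts from Theorem~\ref{thm-free}, but it invokes the equational criterion for flatness: a relation $r_1x_1+r_2x_2=0$ in the flat module $M$ yields $y_j\in M$, $a_{i,j}\in R$ with $x_i=\sum_j a_{i,j}y_j$ and $r_1a_{1,j}+r_2a_{2,j}=0$; then a degree count shows the $y_j$ occurring in $x_1$ live in degrees $\{3,6\}$ while those occurring in $x_2$ live in $\{1,2,4,5,7,8,11,14\}$ (using that $R$ is supported in the semigroup $\langle 3,7\rangle$ and that $M_{m=0}=0$), so no $y_j$ is shared and $r_1=r_2=0$. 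You instead choose a homogeneous basis of the free module and run a gcd argument in the UFD $R=\FF_2[\xi,\zeta,x^{\pm1}]$, concluding that $x_1$ and $x_2$ would both be $R$-multiples of a single homogeneous $y\in M$, whose degree $m_0$ must satisfy $m_1-m_0,\ m_2-m_0\in\langle3,7\rangle$; your finite check forcing $m_0=0$ and the vanishing $M_{m=0}=0$ are exactly the same numerical inputs as the paper's disjointness-of-degree-sets argument, just packaged around one element $y$ rather than the family $y_j$. What each route buys: the paper's version never uses that $R$ is a UFD and needs no choice of basis, so it is the more robust formulation; yours is more elementary (no equational criterion of flatness) and gives the slightly stronger structural statement $x_1=-r_2'y$, $x_2=r_1'y$, at the cost of relying on unique factorization in $R$ and on the graded freeness providing a homogeneous basis. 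The only details worth spelling out in a final write-up are the standard facts you use implicitly: in a $\ZZ$-graded domain every divisor of a homogeneous element is homogeneous (so the gcd and the quotients $r_1',r_2'$ are $m$-homogeneous, well defined up to the degree-$0$ units $x^l$), and the element $y$ is homogeneous because $x_2=r_1'y$ with $x_2,r_1'$ homogeneous and $M$ graded and torsion-free. Neither point is a gap.
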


\begin{proof}
We will use the fact that $M$ is a free $R$-module. In particular, it is a flat $R$-module. The flatness criterion applied to the relation above states that there exist elements $y_j\in M$ and $a_{i,j}\in R$ such that 
\[ \sum_i r_i a_{i,j} = 0, \quad x_i = \sum_j a_{i,j} y_j.\]
Since $x_i$ are homogeneous, we may assume that the relation above is also homogeneous. Then we may choose $y_j$ and $a_{i,j}$ homogeneous such that if $a_{i,j}\neq 0$ then $\deg(x_i) = \deg(a_{i,j})+\deg(y_j)$. 

Consider the degrees of nonzero elements in $M$ and $R$. First we claim that $M$ is zero in degree $m=0$. Indeed, $M$ is a quotient of $\bigoplus_{x_L, m} H^0 (X_\Delta, \cO_{m\cdot e})$, and this space is zero in degree $m=0$. The ring $R$ has nonzero elements in degrees $m=0,3,6,7, 9, 10, 12, 13, 14, \ldots$.

For each $x_i$ consider the possible degrees of $y_j$ that are involved in the equality $x_i = \sum a_{i,j} y_j$; that means, $y_j$ such that $a_{i,j}\neq 0$. Considering the possible degrees of $a_{i,j}\in R$, we get the possible degrees $m$ of $y_j$ as:
\begin{align*}
 x_1 &: m=3, 6,\\
 x_2 &: m=1, 2, 4, 5, 7, 8, 11, 14.
\end{align*}
Since no $y_j$ can appear in both $x_1$ and $x_2$, it follows that $r_i a_{i,j}=0$ for all $i,j$. This implies that both $r_1=0$ and $r_2=0$.
\end{proof}

The previous two theorems complete the proof of Theorem~\ref{thm-main}.

\section{computations} \label{sec-5}

We now prove Lemma~\ref{lem-xi-zeta}. The proof reduces to linear algebra computations. Since the matrices are large (up to size $105\times 105$), these are best done on a computer. We have used the Sage system \cite{SageMath}.

To explain the idea, let us prove the lemma first for a specific $X$ defined by $\alpha=-3/14$. We will then do the same for all $\alpha$.

Let us start by proving that $f_1$ is not zero in $M$. By Lemma~\ref{lem-lift}, this is equivalent to $\xi$ not having a lift mod $4$. Since $\xi$ is the only nonzero element in $R=R_{\ZZ/2\ZZ}$ in its degree, it is also equivalent to saying that the only elements of $R_{\ZZ/4\ZZ}$ in the same degree as $\xi$ are divisible by $2$. 

Let $L$ be the $6\times 6$ integer matrix that represents $\phi$ in the degree of $\xi$ over $\ZZ$. We need to show that every integer vector $v$ such that $Lv \equiv 0 \pmod 4$ is divisible by $2$. Recall that the matrix $L$ can be put in the Smith normal form (by viewing $L$ as a linear transformation $\ZZ^6\to \ZZ^6$ and choosing bases for the source and the target) so that the matrix becomes diagonal. The diagonal entries are the elementary divisors of $L$. These can be computed in Sage with the command $\texttt{L.elementary\_divisors()}$. We know that the matrix $L$ has a $1$-dimensional kernel mod $2$, hence there is one elementary divisor divisible by $2$. We need to check that this divisor is not divisible by $4$. This can be verified by computing the elementary divisors:
\[ \texttt{[1, 1, 1, 1, 1, 2]}.\]

The matrix $L$ is constructed as follows. It has columns indexed by lattice points $(a,b)$ in the triangle $\Delta$ and rows indexed by lattice points $(i,j)$ in the triangle with vertices $(0,0), (m-1,0),(0,m-1)$. The entry in row $(i,j)$ and column $(a,b)$ is 
\[ {a \choose i} {b \choose j}.\]
In Sage it is convenient to give a polygon by its vertices and then list all its lattice points with a single command, for example:
\small
\[ \texttt{Polyhedron(vertices=[(-3/14,0),(15/14,0),(3,7)]).integral\_points()}\]
\normalsize

Next, $f_1^2+g_1 f_1$ is nonzero in $M$ if and only if $\xi^2$ cannot be lifted mod $8$. As in the case of $\xi$, this is equivalent to saying that all elements of $R_{\ZZ/8\ZZ}$ in the same degree as $\xi^2$ are divisible by $2$. Let $L$ be the matrix representing $\phi$ in the degree of $\xi^2$. The elementary divisors of $L$ computed in Sage are:
\small
\[ \texttt{[1, 1, 1, 1, 1, 1, 1, 1, 1, 1, 1, 1, 1, 1, 1, 1, 1, 1, 1, 4, 0]}.\]
\normalsize
The matrix $L$ in this case has size $21\times 20$. If the matrix has more rows than columns then the list of diagonal entries in its Smith normal form is padded with zeros. These zeros correspond to the free part of $\coker(L)$. Since we are interested in the kernel of $L$, we may ignore these trailing zeros. 
There is exactly one (nonzero) elementary divisor divisible by $4$, confirming that $\phi(\tilde{\xi}^2)\equiv 0 \pmod 4$. Since this elementary divisor is not divisible by $8$, it follows that every element in the kernel of $L$ mod $8$ is divisible by $2$, hence $\xi^2$ cannot be lifted mod $8$.

Similarly, the proof that $f_2\neq 0$ and $f_2^2+g_2f_2 \neq 0$ in $M$ is reduced to computing the elementary divisors. In the case of $f_2$ the matrix is $28\times 28$ and has elementary divisors:
\[
 \texttt{[1,..., 1, 1, 14]},
\]
\noindent and in the case of $f_2^2+g_2f_2$:
\[ \texttt{[1,..., 1, 1, 28, 0, 0, 0]}. \]
In the last case, the matrix $L$ has size $105\times 102$ and just as before we may ignore the trailing zeros. The fact that $14$ is not divisible by $4$ and $28$ is not divisible by $8$ proves that $f_2$ and $f_2^2+g_2f_2$ are nonzero in $M$. 

\subsection{General triangles $\Delta$.}

We will now prove Lemma~\ref{lem-xi-zeta} for all $X = \Bl_e X_\Delta$ in Theorem~\ref{thm-main}. 

It suffices to prove that $\xi^2$ and $\zeta^2$ do not lift mod 8 no matter which triangle $\Delta$ we consider. The case of $\xi$ and $\zeta$ follows from this. For example, if $\xi$ lifts mod $4$ then by the same argument as in Section~\ref{sec-4}, $\xi^2$ automatically lifts mod $8$.

Let us start with $\xi^2$. We need to show that the triangle of $\xi^2$ does not support an integral polynomial  $s$ such that $\phi(s) \equiv 0 \pmod{8}$ and $s\not\equiv 0 \pmod 2$. It suffices to prove this for a larger polygon that contains the triangle of $\xi^2$. Indeed, if we can find such $s$ supported in the triangle, then the same $s$ is also supported in the larger polygon. We can thus choose a polygon that contains some (or even all) of the triangles that we are considering. Then proving that the polygon does not support such $s$ proves it for all these triangles.

We are interested in triangles with $\alpha\in [-4/15, -16/105]$. We divide this set of $\alpha$ into two subintervals:
\[ [-4/15, -3/14], [-3/14, -16/105].\]
The triangles of $\xi$ for the first set of $\alpha$ all lie in a larger triangle with vertices
\[ (-4/15,0), (15/14,0), (3,7),\]
and for the second set of $\alpha$ they lie in a larger triangle with vertices
\[ (-3/14,0), (17/15,0), (3,7).\]
We will now consider the map $\phi$ from polynomials supported in twice these two larger triangles to $\ZZ[x,y]/(x,y)^6$, and compute its elementary divisors. In both cases the matrix $L$ has size $21\times 21$ and the elementary divisors are the same:
\[
 \texttt{[1, ...,1,4]}
 \]
Since $8$ does not divide any elementary divisor, this shows that $\xi^2$ can not be lifted mod $8$ for any triangle $\Delta$.

Next we do the same computation for $\zeta^2$. The triangles of $\zeta$ have vertices 
\[ (0,0), (0,3), (x_T/45,  49/3),\]
where $x_T$ ranges in the interval $[331, 343]$. We divide this interval into $4$ subintervals with vertices at
\[ x_0 = 331, x_1 = 335, x_2=339, x_3=341.5, x_4=343.\]
Then the triangles corresponding to the $i$-th subinterval, $i=1,\ldots, 4$, lie in a trapezoid with vertices
\[ (0,0), (3,0), (x_{i-1}/45, 49/3),(x_i/45, 49/3).\]
We consider the map $\phi$ from polynomials supported in twice the trapezoid to $\ZZ[x,y]/(x,y)^{14}$. We list the size of the matrix $L$ and the elementary divisors in each of the four cases:
\begin{align*} 
105\times 104,  & \texttt{[1,..., 1,1,28,0]},\\
105\times 104,  & \texttt{[1,..., 1,1,15652,0]},\\
105\times 105,  & \texttt{[1,..., 1,1,42028]},\\
105\times 103,  & \texttt{[1,..., 1,1,4,0,0]}.
\end{align*}
The size of the matrices tells us that we may ignore the trailing zeros. Since the nonzero elementary divisors are not divisible by $8$, this shows that $\zeta^2$ cannot be lifted mod $8$ for any triangle $\Delta$. \qed


\end{document}